\DeclareSymbolFont{cyrletters}{OT2}{wncyr}{m}{n}
\DeclareMathSymbol{\Sha}{\mathalpha}{cyrletters}{"58}
\newcommand{\A}{\ensuremath{{\mathbb{A}}}}
\newcommand{\C}{\ensuremath{{\mathbb{C}}}}
\newcommand{\Z}{\ensuremath{{\mathbb{Z}}}\xspace}
\renewcommand{\P}{\ensuremath{{\mathbb{P}}}}
\newcommand{\Q}{\ensuremath{{\mathbb{Q}}}}
\newcommand{\R}{\ensuremath{{\mathbb{R}}}}
\newcommand{\F}{\ensuremath{{\mathbb{F}}}}
\newcommand{\ra}{\rightarrow}
\newcommand\Conf{\operatorname{Conf}}
\newcommand\Hom{\operatorname{Hom}}
\newcommand\Aut{\operatorname{Aut}}
\newcommand\im{\operatorname{im}}
\newcommand\Gal{\operatorname{Gal}}
\newcommand\Nm{\operatorname{Nm}}
\newcommand\Sur{\operatorname{Sur}}
\newcommand\Tr{\operatorname{Tr}}
\newcommand\tensor{\otimes}
\newcommand\isom{\simeq}
\newcommand\sub{\subset}
\newcommand\Disc{\operatorname{Disc}}
\newcommand\Spec{\operatorname{Spec}}
\newcommand\Frob{\operatorname{Frob}}
\renewcommand\O{\mathcal{O}}
\newcommand\bq{\begin{equation}}
\newcommand\eq{\end{equation}}
\newtheorem{proposition}{Proposition}[section]
\newtheorem{theorem}[proposition]{Theorem}
\newtheorem{corollary}[proposition]{Corollary}
\newtheorem{lemma}[proposition]{Lemma}
\newtheorem{conjecture}[proposition]{Conjecture}
\theoremstyle{remark}
\newtheorem{remark}[proposition]{Remark}
\newenvironment{definition}{\vspace{2 ex}{\noindent{\bf Definition. }}}{\vspace{2 ex}}
\newtheorem{nts}{Note to self}
\renewcommand\ss{Schur $\sigma$-}
\newcommand\pp{pro-$p$\xspace}
\newcommand\shs{Schur $\sigma$-ancestor }
\renewcommand\O{\mathcal{O}}
\newcommand{\fG}{P}
\newcommand\tH{G}
\newcommand{\un}{\operatorname{un}}
\renewcommand{\v}{\infty}
\newcommand\GG{G}
\newcommand{\CCHur}{\mathsf{CHur}}
\newcommand{\CHur}{\operatorname{CHur}}
\newcommand{\Ct}{C_2}
\newcommand{\SC}{C}
\title{Nonabelian Cohen-Lenstra Heuristics over Function Fields}
\author{Nigel Boston}
\address{Department of Mathematics\\
University of Wisconsin-Madison \\ 480 Lincoln Drive \\
Madison, WI 53706 USA}
\email{boston@math.wisc.edu}
\author{Melanie Matchett Wood}
\address{Department of Mathematics\\
University of Wisconsin-Madison \\ 480 Lincoln Drive \\
Madison, WI 53706 USA\\
and
American Institute of Mathematics\\600 East Brokaw Road\\
San Jose, CA 95112 USA}
\email{mmwood@math.wisc.edu}
\subjclass{11G20, 11R11, 11R29, 11R58, 11R45 }
\keywords{Cohen-Lenstra heuristics, $p$-class tower groups, unramified extensions, quadratic fields}
\begin{document}

\begin{abstract}
Boston, Bush, and Hajir have developed heuristics, extending the Cohen-Lenstra heuristics, that 
conjecture the distribution of the Galois groups of the maximal unramified pro-$p$ extensions of imaginary quadratic number fields for $p$ an odd prime.
In this paper, we find the moments of their proposed distribution, and further prove there is a unique distribution with those moments.  Further, we show that in the function field analog, for imaginary quadratic extensions of $\F_q(t)$,
the Galois groups of the maximal unramified pro-$p$ extensions, as $q\ra \infty$, have the moments predicted by  
the Boston, Bush, and Hajir heuristics.  In fact, we determine the moments of the Galois groups of the maximal unramified pro-odd extensions of imaginary quadratic function fields, leading to a conjecture on  Galois groups of the maximal unramified pro-odd extensions of imaginary quadratic number fields.
\end{abstract}

\maketitle

\section{Introduction}
We fix an odd prime $p$ throughout the paper.  The Cohen-Lenstra
heuristics  \cite{CL84} predict the distribution of abelian $p$-groups that show up
as the $p$-primary part of the class group of an imaginary quadratic
number field as we vary the field.  In particular, there is a measure $\mu_{CL}$ on finite abelian $p$-groups, such that
$\mu_{CL}(G)>0$ for every finite abelian $p$-group $G$, that is
uniquely characterized by the fact that for any $G_1,G_2$ finite
abelian $p$-groups $ \mu_{CL}(G_1)/\mu_{CL}(G_2)={|
  \Aut(G_2)|}/{|\Aut(G_1)|}.  $
 We let $D_X$ denote the set of imaginary quadratic fields of absolute 
discriminant
less than $X$, and let $C_K$ denote the $p$-primary part of the class group of a field $K$, called the $p$-class group of $K$.
Cohen and Lenstra then conjecture the following.
\begin{conjecture}[Cohen-Lenstra, 8.1 of \cite{CL84}]\label{C:CL}
For any ``reasonable'' function $f$ on isomorphism classes of finite abelian $p$-groups, we have 
$$
\lim_{X\ra\infty} \frac{\sum_{K \in D_X} f(C_K)}{\#D_X} =\int_G f(G) d\mu_{CL}.$$

\end{conjecture}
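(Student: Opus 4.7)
The plan is to establish the conjecture by the method of moments, with surjection counts as the test functions. First I would record the uniqueness statement that $\mu_{CL}$ is the unique probability measure on isomorphism classes of finite abelian $p$-groups satisfying
\[
\int_{G} |\Sur(G,H)|\, d\mu_{CL}(G) = 1
\]
for every finite abelian $p$-group $H$. Given this uniqueness, together with the fact that any ``reasonable'' $f$ admits an expansion in the basis $\{|\Sur(\cdot,H)|\}_H$ with sufficient convergence, Conjecture~\ref{C:CL} reduces to proving, for every such $H$, the surjection--moment limit
\[
\lim_{X \to \infty} \frac{1}{\#D_X} \sum_{K \in D_X} |\Sur(C_K, H)| = 1. \qquad (\star)
\]

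Next I would interpret $(\star)$ via class field theory. A surjection $C_K \twoheadrightarrow H$ corresponds to an unramified abelian extension $L/K$ equipped with an identification $\Gal(L/K) \cong H$. Because complex conjugation acts as inversion on the class group of any CM field, $L/\Q$ is Galois with $\Gal(L/\Q) = H \rtimes \Z/2\Z$, the generalized dihedral group on $H$. Thus $(\star)$ is equivalent to an asymptotic count of generalized dihedral extensions of $\Q$ with bounded discriminant of the fixed imaginary quadratic subfield and with ramification exactly at the primes dividing $\disc(K)$, each splitting in a prescribed way.

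I would then carry out the count inductively using the $p$-power filtration of $H$. For the base case $H = \Z/p\Z$, I would deploy Scholz reflection relating the $p$-rank of the class group of $K = \Q(\sqrt{-d})$ to that of $\Q(\sqrt{pd})$, combined with Davenport--Heilbronn (for $p = 3$) or its conjectural higher-$p$ analogues, to extract the main term $p-1$ predicted by $(\star)$. For cyclic $H = \Z/p^k\Z$, I would lift one $p$-layer at a time, using Kummer descent over $K(\zeta_{p^k})$ together with equidistribution of Kummer generators under the Galois action to control conditional moments. For general abelian $H$, I would reduce to cyclic quotients by M\"obius inversion on the subgroup lattice and verify the required multiplicativity on both sides of $(\star)$.

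The main obstacle---and the reason the conjecture has resisted proof since 1984---is the counting step once $p > 3$ or $H$ is non-cyclic: no existing technique bounds averages of unramified $H$-extensions of imaginary quadratic fields beyond the reach of Bhargava's parametrizations (essentially $p = 3$ and $|H| \leq 9$). A plausible route forward is to translate the function-field calculation proved later in this paper (which proceeds via Hurwitz-space point counts in the $q \to \infty$ limit) into a number-field input, either by a trace-formula comparison or by an Euler-product decomposition whose local factors match those produced geometrically. Succeeding at that translation would, by the reduction above, complete the proof of Conjecture~\ref{C:CL}.
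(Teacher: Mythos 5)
The statement you are trying to prove is Conjecture~\ref{C:CL}, the Cohen--Lenstra conjecture itself; the paper offers no proof of it, because none is known. Your proposal is therefore not being compared against a proof in the paper but must stand on its own, and it does not: it contains a genuine gap that you yourself identify. The reduction in your first two paragraphs (moments determine $\mu_{CL}$, so it suffices to prove $(\star)$ for every finite abelian $p$-group $H$) is sound in spirit --- the uniqueness statement is \cite[Lemma 8.2]{EVW1}, cited in this paper, and the class-field-theoretic translation of $(\star)$ into a count of generalized dihedral extensions is standard --- though even there one should be careful that passing from ``all moments equal $1$'' to convergence of the distribution of $C_K$ requires knowing the empirical moments converge, not merely that the limiting measure is determined by its moments. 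But the decisive failure is in your third paragraph: the asymptotic $(\star)$ is a theorem only for $H=\Z/3\Z$ (Davenport--Heilbronn); for $p\geq 5$, and already for $H=\Z/5\Z$, no proof of $(\star)$ exists. Scholz reflection gives relations between $3$-ranks, not asymptotics for higher $p$, and the ``conjectural higher-$p$ analogues'' of Davenport--Heilbronn you invoke are themselves open, so your base case already assumes what is to be proved. The inductive steps (Kummer descent for cyclic $p$-power $H$, M\"obius inversion for general abelian $H$) likewise rest on equidistribution inputs that are not available.

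Your final paragraph concedes this and proposes transferring the function-field result of this paper back to number fields via a trace-formula comparison or Euler-product matching. No such transfer principle is known; the function-field theorems here are proved by counting $\F_q$-points on Hurwitz schemes in a $q\to\infty$ limit, a mechanism with no number-field counterpart. So the proposal is a (reasonable) strategy outline for an open problem, not a proof, and should not be presented as one.
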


By class field theory, the $p$-class group of a
number field $K$ is isomorphic to the Galois group $A_K$ of the
maximal abelian unramified $p$-extension of $K$.  We use this
perspective in which Cohen-Lenstra predicts the distribution of Galois
groups of such extensions to consider a generalization of the above
conjecture to nonabelian unramified extensions of imaginary quadratic
fields $K$, as follows.

Let $G_K$ be the Galois group of the maximal unramified pro-$p$ extension
of $K$, also called its \emph{$p$-class tower group}. Boston, Bush,
and Hajir \cite{BBH16} have made predictions about how often one
should expect a given group to appear as $G_K$.  Unlike $A_K$, it
turns out that $G_K$ can be infinite and this introduces new features
in the nonabelian case--for example, the measure on candidate groups
is no longer discrete.  We put a measure $\mu_{BBH}$ on the set of finitely
generated pro-$p$ groups (see Section~\ref{S:definemu} for the precise
definition), so that the conjecture of Boston, Bush,
and Hajir  is the following.

\begin{conjecture}[Boston-Bush-Hajir, cf. \cite{BBH16}]\label{C:BBH}
For any ``reasonable'' function $f$ on isomorphism classes of pro-$p$ groups, we have 
$$
\lim_{X\ra\infty} \frac{\sum_{K \in D_X} f(G_K)}{\#D_X} =\int_G f(G) d\mu_{BBH}.$$
\end{conjecture}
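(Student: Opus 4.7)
The plan is to prove Conjecture~\ref{C:BBH} by the method of moments. The first step is a soft reduction: I would show that $\mu_{BBH}$ is uniquely characterized, among measures on finitely generated pro-$p$ groups satisfying a mild support/tightness condition, by its surjection moments
\[
M(H) := \int \#\Sur(G, H) \, d\mu_{BBH}(G), \qquad H \text{ a finite } p\text{-group},
\]
and simultaneously identify a dense class of ``reasonable'' test functions $f$ which can be uniformly approximated by finite linear combinations of the functions $G \mapsto \#\Sur(G, H)$. Granted these two facts, together with a tightness check to rule out escape of mass in the space of pro-$p$ groups, Conjecture~\ref{C:BBH} reduces to the family of moment identities
\[
\lim_{X \to \infty} \frac{1}{\#D_X} \sum_{K \in D_X} \#\Sur(G_K, H) = M(H)
\]
as $H$ ranges over finite $p$-groups. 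The target values $M(H)$ can be read off directly from the construction of $\mu_{BBH}$ given in Section~\ref{S:definemu}, yielding a closed group-theoretic formula with the expected $|\Aut(H)|^{-1}$ weighting.

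The substantive content lies on the empirical side. By nonabelian Galois theory of unramified covers of $\Spec \O_K$,
\[
\#\Sur(G_K, H) = |\Aut(H)| \cdot \#\{\text{unramified Galois } H\text{-extensions of } K\},
\]
so the moment identity becomes a statement about the average number of unramified $H$-extensions of imaginary quadratic $K$ ordered by absolute discriminant. I would attack this by stratifying $H$ along its lower central series $H = N_0 \supset N_1 \supset \dots \supset N_r = 1$ and building extensions inductively: at each stage one lifts a surjection $G_K \twoheadrightarrow H/N_i$ to a surjection $G_K \twoheadrightarrow H/N_{i+1}$ by solving an embedding problem, subject to the constraint that the lift remain unramified at every finite prime. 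The base case is the abelian Cohen-Lenstra moment for $H^{ab}$, and the inductive step is governed by a cohomological obstruction in $H^2(G_K, N_i/N_{i+1})$ together with an orbit count for the set of solutions; after summing over $K \in D_X$ and exchanging the order of summation, one obtains a count of number fields of bounded discriminant with prescribed Galois-theoretic structure, which is the natural input for arithmetic statistics.

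The main obstacle is that even the abelian base case --- the moment $\lim_{X \to \infty} (\#D_X)^{-1} \sum_{K \in D_X} \#\Sur(A_K, A)$ for a fixed finite abelian $p$-group $A$ --- is open in the number field setting for $p \geq 5$, with only the $A = \Z/3\Z$ moment known unconditionally via Davenport-Heilbronn. Any complete proof must therefore introduce genuinely new counting technology. My proposed route is to first establish the desired moment identities over $\F_q(t)$ using Hurwitz space methods and the stable cohomology techniques in the style of Ellenberg-Venkatesh-Westerland, and then to transfer the identities to the number field side via a comparison principle relating moment generating functions on the two sides. The transfer step is where the fundamental barrier sits: it would simultaneously resolve the classical Cohen-Lenstra moment conjectures and their BBH refinement, so a realistic first milestone is to carry out steps one through three unconditionally and then to isolate the precise analytic comparison statement whose resolution would complete the argument.
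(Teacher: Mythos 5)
The statement you are addressing is a conjecture; the paper does not prove it, and your proposal does not either — as you yourself acknowledge in your final paragraph. What you have written is a research program, and it is in fact essentially the program the paper itself carries out for the parts that are currently accessible: Theorem~\ref{T:intromom} computes the moments of $\mu_{BBH}$ (they all equal $1$ for the $\sigma$-equivariant surjection counts), Theorem~\ref{T:intromomdet} shows these moments determine the measure, and Theorem~\ref{T:FFqlimit} establishes the corresponding moment identities over $\F_q(t)$ in the large-$q$ limit via the Ellenberg--Venkatesh--Westerland Hurwitz space machinery. So your steps one and three match the paper's own evidence-gathering; your step two (inductively solving unramified embedding problems over number fields, with the abelian Cohen--Lenstra moments as base case) and your step four (a transfer principle from $\F_q(t)$ to $\Q$) are exactly where the open problems live, and no reorganization of the argument removes them.

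Beyond the honest admission that the argument cannot be completed, three specific points in your outline diverge from what actually works. First, the moments that equal $1$ and that determine the measure are the $\sigma$-equivariant ones $\int|\Sur_\sigma(G,H)|\,d\mu_{BBH}$, where $\sigma$ is the generator-inverting automorphism induced by complex conjugation; the plain counts $\#\Sur(G,H)$ are recovered from these by summing over $\sigma$-invariant subgroups of $H\times H$ surjecting onto the first factor (Section~\ref{S:noneq}), so working non-equivariantly from the start both obscures the clean answer and loses the structure needed for the Hurwitz-space count. Second, you describe the ``moments determine the measure'' step as a soft reduction, but it is not: moments of unbounded random variables do not determine a distribution in general, and the paper's proof (Theorem~\ref{T:momdet}) requires the explicit formula of Lemma~\ref{L:BBHformula} for $\mu_{BBH}$ of a basic cylinder set together with the estimate $\prod_{k\geq 1}(1-p^{-k})>.53$ to verify the row-sum bound $\sup_i\sum_j a_{i,j}<2$ demanded by Lemma~\ref{L:la}; without some such quantitative input the uniqueness claim is unjustified. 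Third, even granting all the moment identities for the empirical averages, Conjecture~\ref{C:BBH} for a general reasonable $f$ does not follow formally, because the empirical distributions of the $G_K$ need only converge as a limit of measures rather than be governed by a single measure — a caveat the paper itself records immediately after Theorem~\ref{T:FFqlimit}.
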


Of such reasonable $f$, certain are particularly interesting, and their averages $\int_G f(G) d\mu_{BBH}$ we call the \emph{moments} of the measure $\mu_{BBH}$.
 To define these $f$, first note that the $p$-class
tower group $G_K$ has a generator-inverting automorphism $\sigma$
coming from the action of $\Gal(K/\Q)$.  If $G$ and $H$ are both  profinite groups for which we have a chosen automorphism (we call both automorphisms $\sigma$), then we write $\Sur_\sigma(G,H)$ for the continuous
``$\sigma$-equivariant'' surjections from $G$ to $H$. 
The measure $\mu_{BBH}$ is supported on groups $G$ with a unique, up to conjugation, generator-inverting automorphism, which we also denote as $\sigma$. 
The average $\int_G |\Sur_\sigma(G,H)| d\mu_{BBH}$ is called the \emph{$H$-moment} of the measure $d\mu_{BBH},$ and we determine these moments. 
(See Section~\ref{S:noneq} for the simple relationship between these moments and the analog without the $\sigma$-equivariant condition.)

\begin{theorem}[Moments of $\mu_{BBH}$]\label{T:intromom}
For every finite $p$-group $H$ with a generator-inverting automorphism $\sigma$, we have
\begin{equation}\label{E:mom}
\int_G |\Sur_\sigma(G,H)| d\mu_{BBH}=1.
\end{equation}
\end{theorem}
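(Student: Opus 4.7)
The plan is to compute the $H$-moment of $\mu_{BBH}$ directly from its construction in Section~\ref{S:definemu} as a limit of random-presentation measures, in parallel with the classical moment identity for the Cohen--Lenstra measure.

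Since $H$ is finite, the function $G\mapsto|\Sur_\sigma(G,H)|$ factors through the finite $\sigma$-equivariant quotients of $G$, so it suffices to work with the approximating finite-level measures $\mu_n$ whose limit is $\mu_{BBH}$. Concretely, a $\mu_n$-random group arises as $F_n/\langle\langle r_1,\ldots,r_n\rangle\rangle$, where $F_n$ is the free pro-$p$ group of rank $n$ equipped with the generator-inverting automorphism $\sigma$, and the relators $r_i$ are drawn independently from the Haar measure on a suitable $\sigma$-equivariant relator space. First I would use linearity of expectation to write
$$
\int|\Sur_\sigma(G,H)|\,d\mu_n = \sum_{\phi\in\Sur_\sigma(F_n,H)} \prod_{i=1}^{n}\Pr[\phi(r_i)=1],
$$
which reduces the problem to (a) enumerating $\sigma$-equivariant surjections $F_n\twoheadrightarrow H$, and (b) computing the probability that a Haar-random $\sigma$-equivariant relator maps to the identity under a fixed $\phi$.

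Next I would carry out these two counts. The $\sigma$-equivariant homomorphisms $F_n\to H$ are parameterized by $n$-tuples of elements in the $\sigma$-antisymmetric set $H^-:=\{h\in H:\sigma_H(h)=h^{-1}\}$, giving a total of $|H^-|^n$ maps. A parallel analysis of the relator space, using its identification (modulo higher Frattini terms) with a free $\sigma$-module, shows that any fixed $\phi$ kills a Haar-random relator with probability exactly $1/|H^-|$. Combining these yields
$$
\int|\Sur_\sigma(G,H)|\,d\mu_n = \frac{|\Sur_\sigma(F_n,H)|}{|H^-|^n}.
$$
Finally I would pass to the limit $n\to\infty$. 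The difference $|H^-|^n - |\Sur_\sigma(F_n,H)|$ counts $\sigma$-equivariant homomorphisms $F_n \to H$ whose image lies in some proper $\sigma$-invariant subgroup $H'\subsetneq H$; each such $H'$ contributes at most $|(H')^-|^n$, which is geometrically smaller than $|H^-|^n$ because $[H:H']$ is a positive power of $p$. Summing over the finitely many proper $\sigma$-invariant subgroups gives an $o(1)$ error, so the ratio tends to $1$.

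The main obstacle is the precise enumeration in the middle step: one needs that the generator and relator spaces each contribute the same $\sigma$-equivariant ``size'' $|H^-|$ per copy, which reflects the balanced Schur $\sigma$-presentation condition $d(G)=r(G)$ central to the Boston--Bush--Hajir framework. Verifying this matching requires a careful $\sigma$-equivariant Frattini analysis in the free pro-$p$ group $F_n$, ensuring that the Haar pushforward of the relator space onto $H$ is uniform on $H^-$. Once this matching is established, the cancellation between numerator and denominator is exact in the limit, yielding the $H$-moment equal to $1$.
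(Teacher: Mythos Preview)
Your proposal rests on a misreading of the definition of $\mu_{BBH}$. In Section~\ref{S:definemu} the measure is \emph{not} constructed as a limit $\lim_{n\to\infty}\mu_n$ of balanced random presentations of growing rank; it is defined as a Cohen--Lenstra weighted \emph{mixture}: first choose a rank $g$ with probability $\mu_{CL}(g)$, then draw $g$ independent relators from $X=\{s\in\Phi(F_g):\sigma(s)=s^{-1}\}$ and form $F_g/\langle\langle r_1,\dots,r_g\rangle\rangle$. So there is no ``passing to the limit $n\to\infty$'' step; one must instead compute, for each fixed $g$, the conditional moment and then sum against $\mu_{CL}(g)$. It is conceivable that your sequence $\mu_n$ converges weakly to $\mu_{BBH}$ (a nonabelian analogue of Friedman--Washington), but that is a substantial statement not proved here and certainly not the definition.

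A second, related issue is the probability computation. Because relators are drawn from $Y(\Phi(F_g))$ rather than from all of $Y(F_g)$, the probability that a fixed $\phi\in\Sur_\sigma(F_g,H)$ kills a random relator is $|Y(\Phi(H))|^{-1}=p^{r}/|Y(H)|$ (with $r=d(H)$), not $1/|H^-|$. This factor of $p^{r}$ is essential: combined with Lemma~\ref{L:countsur} it makes the conditional $H$-moment at rank $g$ equal to $(p^g-1)(p^g-p)\cdots(p^g-p^{r-1})=|\Sur((\Z/p\Z)^g,(\Z/p\Z)^r)|$, which is precisely what allows the final average $\sum_g \mu_{CL}(g)\,|\Sur((\Z/p\Z)^g,(\Z/p\Z)^r)|$ to collapse to $1$ by the abelian Cohen--Lenstra moment identity. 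Your sketch suppresses both the Frattini restriction on relators and the Cohen--Lenstra averaging, and those two ingredients are exactly where the content of the proof lies.
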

Theorem~\ref{T:intromom} will be proven as part of 
 Theorem~\ref{T:mom} below.
Further, we show that these moments characterize the measure $d\mu_{BBH}$.

\begin{theorem}[Moments characterize $\mu_{BBH}$]\label{T:intromomdet}
If $\nu$ is a
  measure (for the $\sigma$-algebra $\Omega$ generated by groups with a fixed $p$-class $c$ quotient - these terms will be defined in \S 3) on the set of isomorphism
  classes of finitely generated pro-$p$ groups such that
$$
\int_G|\Sur_\sigma(G,H)|d\nu=1
$$
for every finite $p$-group $H$ with a generator-inverting automorphism $\sigma$, then $\nu=\mu_{BBH}.$ 
\end{theorem}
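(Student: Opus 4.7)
The plan is to show that $\nu$ and $\mu_{BBH}$ agree on a $\pi$-system generating $\Omega$, and then invoke the $\pi$-$\lambda$ theorem. The natural $\pi$-system consists of the cylinder sets $C_{c,H}:=\{G : G/F_c(G) \cong H\}$, where $F_c(G)$ denotes the kernel of the $p$-class $c$ quotient, $c$ ranges over positive integers, and $H$ ranges over finite $p$-class $\leq c$ groups carrying a generator-inverting automorphism. By construction of $\Omega$ these sets generate it, and they are closed under finite intersection. So it suffices to establish $\nu(C_{c,H}) = \mu_{BBH}(C_{c,H})$ for every such pair $(c,H)$.

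To convert the moment hypothesis into information about cylinder measures, I would exploit functoriality of the $p$-class $c$ quotient: any $\sigma$-equivariant surjection $G \twoheadrightarrow H$ with $H$ of $p$-class $c$ kills $F_c(G)$ and so factors uniquely through $G/F_c(G)$. Therefore
\begin{equation*}
\int_G |\Sur_\sigma(G,H)|\, d\nu \;=\; \sum_{H'} \nu(C_{c,H'}) \cdot |\Sur_\sigma(H',H)|,
\end{equation*}
where $H'$ ranges over isomorphism classes of finite $p$-class $\leq c$ groups with a generator-inverting $\sigma$. By Theorem~\ref{T:intromom} the same identity holds with $\nu$ replaced by $\mu_{BBH}$, so the signed measure $\lambda$ on such $H'$ defined by $\lambda(H') = \nu(C_{c,H'}) - \mu_{BBH}(C_{c,H'})$ satisfies $\sum_{H'} \lambda(H')\,|\Sur_\sigma(H',H)| = 0$ for every $H$.

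The final task is to invert this linear system. The matrix $\bigl(|\Sur_\sigma(H',H)|\bigr)_{H',H}$ is triangular with respect to the partial order declaring $H' \succeq H$ when $H$ is a $\sigma$-equivariant quotient of $H'$, with strictly positive diagonal entries $|\Aut_\sigma(H)|$; a Möbius inversion over the lattice of $\sigma$-stable normal subgroups then expresses $\delta_H$ as a formal combination of the functionals $|\Sur_\sigma(\cdot, H')|$, forcing $\lambda(H) = 0$. The main obstacle is that this system is genuinely infinite-dimensional: for each $H$ there exist $H'$ of arbitrarily large minimal generator rank surjecting onto $H$, so triangularity does not immediately yield invertibility. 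I would handle this by stratifying the ambient poset by minimal number of generators --- each stratum being finite --- and running a downward induction on generator rank, using the finiteness of the moments (all equal to $1$) to guarantee that the Möbius sums converge absolutely. This strategy parallels the moment-determinacy arguments developed in the abelian Cohen--Lenstra setting, adapted here to the $\sigma$-equivariant pro-$p$ context.
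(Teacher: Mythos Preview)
Your reduction to cylinder sets and the linearization via $\Sur_\sigma$-counts is exactly right, and matches the paper's setup. The genuine gap is in the inversion step. Your proposed ``downward induction on generator rank'' cannot bottom out: for any fixed $H$ of rank $r$, the equation $\sum_{H'}\lambda(H')\,|\Sur_\sigma(H',H)|=0$ involves $H'$ of every rank $\geq r$, so knowing $\lambda$ vanishes on higher ranks would be needed before handling rank $r$, but establishing that in turn requires equations for $H$ of still higher rank. There is no maximal rank from which to start. Likewise, M\"obius inversion over this infinite poset is purely formal unless you prove absolute convergence of the inverted sums, and ``finiteness of the moments'' alone does not supply this; the M\"obius coefficients themselves can grow.

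The paper closes this gap with a genuinely quantitative input you are missing. It normalizes the matrix to $a_{i,j}=|\Sur_\sigma(S_j,S_i)|/|\Aut_\sigma(S_j)|$ and proves the uniform row-sum bound $\sup_i\sum_j a_{i,j}<2$, which makes the system a strict contraction and forces uniqueness (Lemma~4.1). Verifying this bound is not soft: it requires the explicit formula
\[
\mu_{BBH}(\{G:Q_c(G)\cong S\})=\frac{1}{|\Aut_\sigma(S)|}\prod_{k\geq 1}(1-p^{-k})\prod_{k=1}^{d(S)}(1-p^{-k})^{-1}\prod_{k=1+d(S)-h_c(S)}^{d(S)}(1-p^{-k}),
\]
together with the inequality $h_c(S)\leq d(S)$ for the relevant $S$, to get $\mu_{BBH}(\{G:Q_c(G)\cong S\})\geq |\Aut_\sigma(S)|^{-1}\prod_{k\geq1}(1-p^{-k})$. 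Since $\prod_{k\geq1}(1-p^{-k})>1/2$ for $p\geq 3$, the row sums are bounded by a constant less than $2$ times the already-computed $\mu_{BBH}$-moment, hence less than $2$. Without this explicit lower bound on $\mu_{BBH}$ of individual cylinders, or an equivalent estimate, your inversion argument remains incomplete.
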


In fact, in Theorem~\ref{T:momdet} we prove a slightly stronger version of Theorem~\ref{T:intromomdet} in which we only use some of the moments.
If we take $H$ in Equation~\eqref{E:mom} to be abelian and note that under abelianization $\mu_{BBH}$ pushes forward to $\mu_{CL}$, 
 then we recover the observation of Ellenberg, Venkatesh, and Westerland \cite[Section 8.1]{EVW1} that the $A$-moments of $\mu_{CL}$ are $1$ for every abelian $p$-group $A$.  They have also shown that these $A$-moments characterize $\mu_{CL}$  \cite[Lemma 8.2]{EVW1}.  The collection of moments given by averaging $|\Sur_{\sigma}(-,H)|$ is  a fixed upper triangular transformation from the averages of $|\Hom_{\sigma}(-,H)|$.  For finite abelian groups, these latter averages are the mixed moments (of the standard invariants of the group) in the usual sense  (see \cite[Section 3.3]{Clancy15}).
 
In this paper, we prove a  theorem towards the function field analog of Conjecture~\ref{C:BBH}.
We consider the function field $\F_q(t)$, where $q$ is a prime power.
We say $K/\F_q(t)$ is \emph{imaginary quadratic} if $K$ is a degree $2$
extension of $\F_q(t)$ that is ramified at the place corresponding to
$\frac{1}{t}$, or equivalently, the smooth, projective hyperelliptic
curve corresponding to $K$ is ramified over $\infty$.  
For a quadratic
extension $K/\F_q(t)$, we let $K^{\un,\infty}$ be the maximal unramified extension of $K$ that is split completely over every place of $K$ that lies over the place $\infty$ in $\F_q(t)$, and let
$G_K^{\un,\infty}=\Gal(K^{\un,\infty}/K)$, with a generator-inverting automorphism $\sigma$ coming from the action of $\Gal(K/\F_q(t))$ (see Section~\ref{S:ff}). 
\begin{theorem}\label{T:FFqlimit}
Let $H$ be a finite odd order group with a generator-inverting
  automorphism such that the center of $H$ contains no elements fixed
  by $\sigma$ except the identity. 
  Let
  $$
 \delta_q^+ := \limsup_{m \ra\infty}  \frac{  \sum_{K\in E_m} |\Sur_\sigma(G_K^{\un,\infty},H)|}{\#E_m} 
 \textrm{  and  }   \delta_q^- := \liminf_{m \ra\infty}  \frac{  \sum_{K\in  E_m} |\Sur_\sigma(G_K^{\un,\infty},H)|}{\#E_m} ,
  $$
  where $E_m$ denotes the set of imaginary quadratic extensions $\F_q(t)$ with discriminant of norm 
  $q^{2m+2}$.
Then as $q\ra\infty$ among prime powers relatively prime to $2|H|$ and with $(q-1,|H|)=1$, we have
$$
 \delta_q^+ ,  \delta_q^- \ra 1.
$$
\end{theorem}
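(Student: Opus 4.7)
The plan is to convert both the numerator and denominator in the ratio defining $\delta_q^\pm$ into $\F_q$-point counts on Hurwitz spaces, and then apply the Grothendieck--Lefschetz trace formula together with cohomological stability in the style of Ellenberg--Venkatesh--Westerland. Set $\tH = H \rtimes \langle \sigma \rangle$. The central geometric observation is that an imaginary quadratic $K/\F_q(t)$ together with a $\sigma$-equivariant surjection $\phi\colon G_K^{\un,\infty}\twoheadrightarrow H$ is equivalent to a connected $\tH$-Galois cover of $\P^1_{\F_q}$ whose branch locus lies in $\A^1$, whose local monodromies at the finite branch points all lie in the union of conjugacy classes $c\subset H\sigma$ of involutions, and whose local monodromy at $\infty$ is the fixed element $\sigma$ (so that the cover splits completely over every place of $K$ above $\infty$). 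With the discriminant equal to $q^{2m+2}$ in norm, there are $n=2m+1$ finite branch points. After extracting the appropriate automorphism factor, such pairs $(K,\phi)$ are parameterized by the $\F_q$-points of a Hurwitz scheme $\CHur_{\tH,c}^n$ that is a finite étale cover of the configuration space $\Conf^n(\A^1)$.

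The first step would be to make this dictionary precise, carefully accounting for the automorphism count: the number of pairs $(K,\phi)$ corresponding to a given $\F_q$-point of $\CHur_{\tH,c}^n$ is controlled by $|Z(\tH)|$ and by the number of $\sigma$-fixed central elements, and the hypothesis $Z(H)^\sigma=1$ together with $|H|$ being odd ensures this fiber size is the constant $1$. The analogous setup for $\#E_m$ gives a Hurwitz space for $C_2$ with involution monodromies, whose point count admits the classical formula $\#E_m \sim (q-1)q^{2m+1}$ (up to standard automorphism factors). Dividing the two counts reduces Theorem~\ref{T:FFqlimit} to a statement about the stable leading-order behavior of $|\CHur_{\tH,c}^n(\F_q)|$ as $q\to\infty$.

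Next, by Grothendieck--Lefschetz,
$$
|\CHur_{\tH,c}^n(\F_q)| \;=\; \sum_{i\geq 0}(-1)^i \Tr\bigl(\Frob_q \,\big|\, H^i_c(\CHur_{\tH,c,\overline{\F}_q}^n, \Q_\ell)\bigr).
$$
The space $\CHur_{\tH,c}^n$ is smooth of dimension $n$, so its top compactly-supported cohomology is spanned by the geometric components, and this contributes $q^n$ times the number of Frobenius-fixed components. The coprimality hypothesis $(q-1,|H|)=1$ trivializes the cyclotomic character controlling the Galois action on the component set (via the determinant/abelianization of the monodromy), so every geometric component is defined over $\F_q$. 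A version of the Ellenberg--Venkatesh--Westerland component theorem, applied to the non-splitting rack $(c,\tH)$, identifies the stable number of components in a way that, combined with the denominator $\#E_m$, produces precisely the moment value $1$ predicted by Theorem~\ref{T:intromom}.

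The main obstacle is bounding the lower-degree cohomology. One needs $\dim H^i_c(\CHur_{\tH,c,\overline{\F}_q}^n,\Q_\ell)$ to grow only subexponentially in $n$ for $i<2n$, so that Deligne's weight bound forces these terms to contribute $O(q^{n-1/2})$ and hence vanish after normalization in the $q\to\infty$ limit. This is exactly the content of the homological stability package of Ellenberg--Venkatesh--Westerland; the work for this theorem is to check that the relevant stabilization hypotheses are satisfied for our non-abelian pair $(\tH,c)$, using that $|H|$ is odd, $c$ consists of involutions in the non-identity coset, and $Z(H)^\sigma=1$ guarantees that $c$ generates $\tH$ with no unwanted central obstructions. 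Once the main term and the error bound are in place, taking $\limsup$ and $\liminf$ in $m$ (for fixed $q$) and then sending $q\to\infty$ along prime powers coprime to $2|H|(q-1,|H|)^{-1}$ yields $\delta_q^+,\delta_q^-\to 1$.
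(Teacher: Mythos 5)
Your proposal follows essentially the same route as the paper: translate the $\sigma$-equivariant surjection count into a count of marked $(\tH,c)$-extensions, identify these with $\F_q$-points of the Ellenberg--Venkatesh--Westerland Hurwitz scheme, and combine Grothendieck--Lefschetz, the subexponential Betti bounds, and the component theorem to get main term $q^n\cdot\#c$ with error $O(q^{n-1/2})$ before dividing by $\#E_m$. The one imprecision is your claim that $(q-1,|H|)=1$ forces \emph{every} geometric component to be defined over $\F_q$: the Frobenius action on components is governed by lifts to the universal marked central extension (involving the reduced Schur multiplier $H_2(\tH,c)$, not just the abelianized monodromy), and the correct statement, which the paper proves as a separate group-theoretic lemma, is that exactly one lift over each element of $c$ is Frobenius-fixed, so the number of fixed components is $\#c$ even though not all components need be rational.
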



In light of Theorems~\ref{T:intromom} and \ref{T:intromomdet}, this is good evidence for Conjecture~\ref{C:BBH}. 
When $H$ is a $p$-group, the surjections in Theorem~\ref{T:FFqlimit} factor through the maximal pro-$p$ quotient of $G_K^{\un,\infty}$, which is analogous to the $G_K$ defined above. If we have an analogy between $\F_q(t)$ and $\Q$ for any $q$, then the $q$ limits in Theorem~\ref{T:FFqlimit} shouldn't matter, and after that limit we get agreement with the $\mu_{BBH}$ moments by Theorem~\ref{T:intromom}.  Since these moments determine a unique measure by Theorem~\ref{T:intromomdet}, that suggests Conjecture~\ref{C:BBH} for general $f$, though technically the $G_K$ do not have to be distributed according to a measure, but only a limit of measures.

Further, if we assume a vanishing conjecture on the homology of Hurwitz spaces, then under the hypotheses of Theorem~\ref{T:FFqlimit} we would in fact obtain that for $q\geq N(H)$ we have $\delta_q^+ =  \delta_q^-=1$ (see Theorem~\ref{T:withconj}).
Theorem~\ref{T:FFqlimit} suggests the following conjecture, extending Conjecture~\ref{C:BBH} from pro-$p$ groups to pro-odd groups, at least in the case of the moments.

\begin{conjecture}\label{C:prododd}
For any imaginary quadratic number field $K$, let $\mathcal{G}_K$ be the maximal pro-odd quotient of the Galois group of the maximal unramified extension
of $K$.  Then for every finite odd group $H$ with a generating-inverting automorphism
$$
\lim_{X\ra\infty} \frac{\sum_{K \in D_X} \Sur_{\sigma}(\mathcal{G}_K,H)}{\#D_X} =1.
$$
\end{conjecture}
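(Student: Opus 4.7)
The plan is to mirror, in the number field setting, the strategy that proves Theorem~\ref{T:FFqlimit} in the function field setting. By Theorem~\ref{T:intromomdet} the measure $\mu_{BBH}$ (extended to pro-odd groups) is uniquely determined by its $H$-moments, so once one establishes
\[
\lim_{X\to\infty}\frac{1}{\#D_X}\sum_{K\in D_X}|\Sur_\sigma(\mathcal{G}_K,H)|=1
\]
for every finite odd $H$ with a generator-inverting automorphism $\sigma$, the conjecture follows. The first step is to rephrase the inner sum as a count of number field extensions: a $\sigma$-equivariant surjection $\mathcal{G}_K\twoheadrightarrow H$ corresponds, up to $H$-conjugation, to an unramified $H$-extension $L/K$ whose compositum $L/\Q$ is Galois with group $H\rtimes_\sigma C_2$, and conversely every such $L/\Q$ (with quadratic subfield $K$) arises this way. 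Thus the left-hand side counts, on average over $K\in D_X$, the number of Galois $H\rtimes_\sigma C_2$-extensions of $\Q$ whose fixed field under $H$ is $K$.

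Next I would stratify by local inertia at the primes ramifying in $K/\Q$. Because $|H|$ is odd, every inertia subgroup is forced to be a copy of $C_2$ generated by a lift of $\sigma$, and the $H$-conjugacy classes of such lifts are parametrized by the $\sigma$-coinvariants $H^1(\langle\sigma\rangle,H)$. A Malle--Bhargava style local mass heuristic then predicts the count of $H\rtimes_\sigma C_2$-extensions of $\Q$ with prescribed local behavior, and the assertion that the $H$-moment equals $1$ becomes equivalent to the statement that the sum of these local masses, divided by the number of available imaginary quadratic $K$, tends to $1$. This is the number field analog of the homological cancellation that Hurwitz space methods deliver in the proof of Theorem~\ref{T:FFqlimit}; one expects a local Euler product each of whose factors is tailored so that the generator-inverting and odd-order hypotheses on $H$ force the product to collapse to $1$.

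The main obstacle is the number field counting itself: enumerating Galois $H\rtimes_\sigma C_2$-extensions of $\Q$ of bounded discriminant is a widely open case of Malle's conjecture as soon as $H$ is nonabelian or of large rank, and even the abelian case is delicate when one fixes the quadratic resolvent and requires the intermediate extension to be unramified. My expectation is that any genuine proof will for the foreseeable future have to be conditional, assuming a Malle--Bhargava-type counting hypothesis together with horizontal equidistribution of local inertia data. Given such input, what remains is a purely local mass calculation, parallel to but arithmetically distinct from the one implicit in the function field argument, verifying that the averaged local Euler product does indeed equal $1$; the analogous computation in the $q\to\infty$ function field limit, carried out via the stable homology of Hurwitz spaces, is precisely what makes the conjecture plausible.
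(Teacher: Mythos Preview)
The statement you are trying to prove is a \emph{conjecture}; the paper does not prove it and explicitly presents it as a conjecture motivated by the function field result (Theorem~\ref{T:FFqlimit}). There is no ``paper's own proof'' to compare against.

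Your proposal is not a proof either, and you essentially acknowledge this yourself when you write that any genuine argument ``will for the foreseeable future have to be conditional.'' Two concrete issues deserve mention. First, your opening paragraph is circular: Conjecture~\ref{C:prododd} \emph{is} the assertion that the $H$-moments equal $1$, so saying ``once one establishes the moments equal $1$, the conjecture follows'' is not a reduction. You may be confusing this with Conjecture~\ref{C:BBH}, which concerns arbitrary reasonable test functions and for which a moment computation together with a uniqueness statement would constitute genuine progress. Second, Theorem~\ref{T:intromomdet} is stated and proved only for pro-$p$ groups; there is no ``extension to pro-odd groups'' in the paper, and indeed the paper explicitly raises the question of whether an underlying measure on pro-odd groups with these moments even admits a concrete description. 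So invoking that theorem in the pro-odd setting is unjustified.

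The translation step you describe (surjections $\mathcal{G}_K\twoheadrightarrow H$ correspond to $(H\rtimes_\sigma C_2,c)$-extensions of $\Q$) is correct and is exactly Theorem~\ref{T:trans} of the paper, valid over any global field. What is missing, as you note, is the counting input: in the function field case this is supplied by the Ellenberg--Venkatesh--Westerland results on Hurwitz spaces, and over $\Q$ no analogue is known. That gap is the entire content of the conjecture.
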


 Bhargava \cite[Section 1.2]{BhaGeomSieve} has asked what we should expect for the average number of $H$ quotients of $G_K^{\un,\infty}$, for any $H$.
 Conjecture~\ref{C:prododd} suggests the answer for odd $H$.  (See Section~\ref{S:noneq} for the translation
 from our conjecture for $\sigma$-equivariant quotients to the consequence for more general quotients.)
 Bhargava \cite[Section 1.2]{BhaGeomSieve} has proven some intriguing moments for $H=A_3,A_4,A_5,S_3,S_4,S_5$.
 
It would be interesting to have a concrete description of an underlying measure on pro-odd groups that gives the moments on Conjecture~\ref{C:prododd}, as $\mu_{BBH}$ does in the pro-$p$ case.  However, before making a conjectural analog of Conjecture~\ref{C:BBH}, one should note it is an open question whether $\mathcal{G}_K$ is (topologically) finitely generated or not, let alone finitely presented.

In order to prove Theorem~\ref{T:FFqlimit}, in section~\ref{S:Trans}, we translate the sum of counts of surjections to a count of extensions of $\F_q(t)$ with
  certain properties.  
We then, in Section~\ref{S:EVW}, apply the recent powerful results of Ellenberg,
  Venkatesh, and Westerland \cite{EVW1,EVW2}
on homological stability of Hurwitz spaces and the components of Hurwitz spaces along with their Galois action over $\F_q$ in order to count the extensions.
A main motivation for the work of 
Ellenberg, Venkatesh, and Westerland is to prove function field analogs of Conjecture~\ref{C:CL}.  In particular, \cite[Theorem 8.8]{EVW1} gives the case of Theorem~\ref{T:FFqlimit} when $H$ is an abelian $p$-group.
The analysis of components of Hurwitz spaces in \cite{EVW2}
  gives the number of components in terms of certain group-theoretically defined
  quantities, which we compute in the cases necessary for our application.  We apply results on Hurwitz spaces from \cite{EVW1,EVW2}, the Grothendieck-Lefschetz trace formula, and our group theory computation to count $\F_q$ points of a moduli space that parametrize the relevant extensions of $\F_q(t)$.

Finally, we make some remarks on the hypotheses in Theorem~\ref{T:FFqlimit}.  The condition on the center of $H$ comes
from a technical limitation of \cite{EVW2}.
The requirement that
$(q-1,|H|)=1$ ensures that the base field does not have ``extra
roots of unity.'' The case of extra roots of unity is one in which even the Cohen-Lenstra
heuristics are expected to be wrong \cite{malle-cl} and new heuristics
have been proposed by Garton \cite{Gar15} and Adam and Malle \cite{AM15} for that case. To
the authors' knowledge, there is no work on even the Cohen-Lenstra
heuristics in the function field setting when $(q,|H|)>1$ or $2\mid q$.

\section{Background on non-abelian analogs of class groups}\label{S:ff}

Let $Q$ be a global field and $\infty$ a place of $Q$.  
In this paper, we are interested in the cases $Q=\Q$ or $\F_q(t)$ with the usual infinite place.
For a separable, quadratic
extension $K/Q$, we let $K^{\un,\infty}$ be the maximal unramified extension of $K$ that is split completely over all places of $K$ over $\infty$, and let
$G_K^{\un,\infty}=\Gal(K^{\un,\infty}/K)$.
We let $G_K$ be the maximal pro-$p$ quotient of $G_K^{\un,\infty}$.

\begin{remark}  While it looks like we've added the condition
at $\infty$ compared to the definition of $G_K$ for number fields in
the introduction, we could in fact add this condition to the
definition of $G_K$ for a quadratic number field $K$ without
effect because, for an archimedean place, unramified is the same as split
completely.
Also, if $Q=\F_q(t)$ and
$\O_K$ is the integral closure of $\F_q[t]$ in $K$, then class field
theory gives that the abelianization $(G_K^{\un,\infty})^{ab}$ is isomorphic to the
class group $Cl(\O_K)$ of ideals modulo principal ideals, so $G_K^{\un,\infty}$ is the natural function field analog of a ``non-abelian class group.''
\end{remark}

\begin{lemma}\label{L:whinertia}
If $K/Q$ is a separable, quadratic
extension, then
all inertia subgroups of $\Gal(K^{\un,\infty}/Q)$ and the decomposition group at infinity are contained in
$$\{1\} \cup\{r\in \Gal(K^{\un,\infty}/Q)\setminus G_K^{\un,\infty}\, |\, r^2=1 \}.$$
\end{lemma}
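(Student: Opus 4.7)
The plan is to exploit the two defining properties of the extension $K^{\un,\infty}/K$: it is unramified at every place of $K$, and completely split at every place of $K$ above $\v$. These conditions translate directly into statements about the intersections of inertia and decomposition groups with $G_K^{\un,\infty}=\Gal(K^{\un,\infty}/K)$ inside the larger group $\Gal(K^{\un,\infty}/Q)$.

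First I would fix a place $w$ of $K^{\un,\infty}$, let $v$ be the place of $Q$ below it, and denote by $I_w\subseteq D_w\subseteq \Gal(K^{\un,\infty}/Q)$ the inertia and decomposition subgroups at $w$. The standard exact sequence relating these groups to the subfield $K$ tells me that $I_w\cap G_K^{\un,\infty}$ is the inertia subgroup at $w$ for the extension $K^{\un,\infty}/K$, which is trivial by the unramified hypothesis. Likewise, if $v=\v$, then $D_w\cap G_K^{\un,\infty}$ is the decomposition subgroup at $w$ for $K^{\un,\infty}/K$, which is trivial because $K^{\un,\infty}/K$ is completely split at all places over $\v$.

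From this I conclude that the restriction map $\Gal(K^{\un,\infty}/Q)\to \Gal(K/Q)$ is injective on $I_w$ for any $w$, and also injective on $D_w$ when $w\mid\v$. Since $\Gal(K/Q)$ has order two (because $K/Q$ is separable and quadratic), both $I_w$ and, in the infinite case, $D_w$ have order at most $2$. Any nonidentity element $r$ in such a group must therefore satisfy $r^2=1$, and must map to the nontrivial element of $\Gal(K/Q)$, so $r\notin G_K^{\un,\infty}$. Finally, since every inertia (respectively decomposition) subgroup of $\Gal(K^{\un,\infty}/Q)$ is conjugate to one of the $I_w$ (resp.\ $D_w$ for $w\mid\v$), the same containment holds for all of them, yielding the claim.

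I do not expect any serious obstacle; the main point is simply to be careful that ``unramified'' and ``split completely'' for $K^{\un,\infty}/K$ are exactly the right hypotheses to kill the intersection of inertia and (infinite place) decomposition with $G_K^{\un,\infty}$, and that the quotient $\Gal(K/Q)$ has order $2$. The mild subtlety is that in the function field case $\v$ need not be archimedean, so one cannot appeal to ``archimedean unramified = split completely''; one really must use the stronger defining property of $K^{\un,\infty}$.
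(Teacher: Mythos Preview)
Your proposal is correct and follows essentially the same approach as the paper: the paper's proof is a terse two-sentence version of exactly your argument, noting that the intersection of any inertia subgroup (or the decomposition group at $\infty$) with $G_K^{\un,\infty}$ is trivial by the definition of $K^{\un,\infty}$, and hence these subgroups have order at most $2$. Your write-up simply spells out in more detail why triviality of the intersection follows from the unramified and split-completely conditions, and why this forces any nonidentity element to lie outside $G_K^{\un,\infty}$ with square $1$.
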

\begin{proof}
The intersection with $G_K^{\un,\infty}$ of any inertia subgroup or the decomposition group at infinity 
is trivial by the definition of $K^{\un,\infty}$, which also implies they have order at most $2$.
\end{proof}

If $Q$ is a global field and $\infty$ is a place of $Q$ such that $Q$ has no non-trivial finite extensions unramified everywhere and  split completely over $\infty$ (such as in our cases of interest $Q=\Q$ or $\F_q(t)$), we call $Q,\infty$ \emph{rational-like}.  
Then we have that 
$\{r\in \Gal(K^{\un,\infty}/Q)\setminus G_K^{\un,\infty}\, |\, r^2=1 \}$ is non-empty.
So the exact sequence
$$
1\ra G_K^{\un,\infty} \ra \Gal(K^{\un,\infty}/Q) \ra \Gal(K/Q)\ra 1,
$$
splits.
Any lift of the generator of $\Gal(K/Q)$ gives an order
$2$ automorphism of $G_K^{\un,\infty}$ by conjugation.  

\begin{proposition}\label{P:getGI}
Let $Q,\infty$ be rational-like and $K/Q$ a separable, quadratic
extension.
 The  action of an element $\tau\in \Gal(K^{\un,\infty}/Q)\setminus G_K^{\un,\infty}$ of order $2$ on $G_K^{\un,\infty}$ by conjugation inverts a set of (topological) generators of $G_K^{\un,\infty}$.
\end{proposition}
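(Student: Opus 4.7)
The plan is to show that the set $S := \{g \in G_K^{\un,\infty} : \tau g \tau^{-1} = g^{-1}\}$ topologically generates $G_K^{\un,\infty}$, since every element of $S$ is inverted by conjugation by $\tau$. Writing $\Gamma := \Gal(K^{\un,\infty}/Q)$ and $I := \{r \in \Gamma \setminus G_K^{\un,\infty} : r^2 = 1\}$, the map $g \mapsto g\tau$ gives a bijection $S \to I$: one computes $(g\tau)^2 = g \cdot \tau g \tau^{-1}$, which is trivial precisely when $\tau g \tau^{-1} = g^{-1}$. Since $\tau \in I$, the subgroup $\langle I \rangle \subseteq \Gamma$ surjects onto $\Gamma/G_K^{\un,\infty}$, so $\langle I \rangle \cap G_K^{\un,\infty} = \langle S \rangle$, and the whole problem reduces to proving $\langle I \rangle = \Gamma$.

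Let $L$ denote the fixed field of $\langle I \rangle$ in $K^{\un,\infty}$; it suffices to show $L = Q$. Conjugation in $\Gamma$ preserves both the order of an element and its coset modulo $G_K^{\un,\infty}$, so $I$ is conjugation-stable, $\langle I \rangle$ is normal in $\Gamma$, and $L/Q$ is Galois with $\Gal(L/Q) = \Gamma/\langle I \rangle$. By Lemma~\ref{L:whinertia}, every nontrivial element of an inertia subgroup of $\Gamma$ at a place of $Q$, together with every nontrivial element of a decomposition group at $\infty$, already lies in $I \subseteq \langle I \rangle$. Consequently the images of these groups in $\Gal(L/Q)$ are trivial, meaning $L/Q$ is unramified at every place and split completely at $\infty$.

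Every finite Galois subextension of $L/Q$ inherits both properties (the relevant inertia and decomposition groups in the further quotient are still images of trivial subgroups), so the rational-like hypothesis on $(Q,\infty)$ forces each such subextension, and therefore $L$ itself, to equal $Q$. The crux of the argument is the combination performed in the second paragraph: conjugation-stability of $I$ together with Lemma~\ref{L:whinertia} absorbs all inertia and infinity-decomposition data into $\langle I \rangle$, after which the rational-like assumption disposes of everything. The remainder is formal bookkeeping with the split sequence $1 \to G_K^{\un,\infty} \to \Gamma \to \Gal(K/Q) \to 1$.
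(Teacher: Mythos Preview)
Your proof is correct and takes essentially the same approach as the paper's: both show that the set $I$ of order-$2$ elements outside $G_K^{\un,\infty}$ generates a normal subgroup whose fixed field is forced to be $Q$ by Lemma~\ref{L:whinertia} and the rational-like hypothesis, and then translate back to inverted generators via the bijection $g\mapsto g\tau$ between $S$ and $I$. One small wording point: the fixed-field argument only shows that $\langle I\rangle$ is \emph{dense} in $\Gamma$ (not literally $\langle I\rangle=\Gamma$ as abstract groups), but since $G_K^{\un,\infty}$ is open in $\Gamma$ the intersection $\langle S\rangle=\langle I\rangle\cap G_K^{\un,\infty}$ is then dense in $G_K^{\un,\infty}$, which is exactly the topological generation you want.
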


\begin{proof}
We write $\Gal(K^{\un,\infty}/Q)= G_K^{\un,\infty} \rtimes \langle \tau \rangle$. 
Let $R$ be the closed subgroup of $\Gal(K^{\un,\infty}/Q)$ generated by $\{r\in \Gal(K^{\un,\infty}/Q)\setminus G_K^{\un,\infty}\, |\, r^2=1 \}$.  From the definition, it follows that $R$ is normal.  
 So $R$ corresponds to a subfield
$M$ of $K^{\un,\infty},$ which is Galois over $Q$, and such that in $\Gal(M/Q)$ all inertia groups are trivial and the decomposition group at infinity is trivial by Lemma~\ref{L:whinertia}.  It follows that $M=Q$.  
The order $2$ elements of  $\Gal(K^{\un,\infty}/Q)\setminus G_K^{\un,\infty}$ are the $(g_i,\tau),$ for $g_i\in G_K^{\un,\infty}$ such that $g_i^\tau=g_i^{-1}.$ 
  So the words in  $\{(g_i,\tau) \, |\, g_i\in G_K^{\un,\infty}, g_i^\tau=g_i^{-1}\}$ are dense in $\Gal(K^{\un,\infty}/Q).$ An element of $G_K^{\un,\infty}$ equivalent to one of these words is a word in the symbols
$\{ g_i\in G_K^{\un,\infty}\, |\, g_i^\tau=g_i^{-1}\}$, and such elements are a dense subgroup of $G_K^{\un,\infty}$.  Thus  the set $\{ g_i\in G_K^{\un,\infty}\, |\, g_i^\tau=g_i^{-1}\}$ topologically generates $G_K^{\un,\infty}$.
\end{proof}

In light of Proposition~\ref{P:getGI}, we pick a lift  $\tau$ of the generator of $\Gal(K/Q)$
to $\Gal(K^{\un,\infty}/Q)$ and let conjugation by $\tau$ be our chosen 
generator-inverting automorphism $\sigma$ of $G_K^{\un,\infty}$.  Further, the Schur-Zassenhaus theorem (Prop.2.3.3 of \cite{Wil98}) guarantees that all the lifts of the generator of $\Gal(K/Q)$ to the pro-$p$ quotient $G_K$ of $\Gal(K^{\un,\infty}/Q)$ (or the pro-odd quotient) are conjugate.
Thus for an odd finite group $H$ with automorphism $\sigma$, we then have that $|\Sur_{\sigma}(G_K^{\un,\infty},H)|$ does not depend on the choice of $\tau$.

\section{Boston-Bush-Hajir heuristics: background and notation}\label{S:definemu}
Koch and Venkov \cite{KV75} have shown that for an imaginary quadratic extension $K/\Q$, the group $G_K$ satisfies certain properties we will now outline.  For a pro-$p$ group $G$, let $d(G):=\dim_{\Z/p\Z}H^1(G,\Z/p\Z)$ and 
$r(G):=\dim_{\Z/p\Z}H^2(G,\Z/p\Z).$   These are, respectively, the generator rank and the relation rank of $G$ as a pro-$p$ group.  
For a pro-finite group $G$, we define a \emph{GI-automorphism} of $G$ to be a  $\sigma\in\Aut(G)$ such that 
$\sigma$ acts as inversion on a set of (topological) generators.  For a pro-$p$ group, this is equivalent to requiring that 
$\sigma^2=1$, which $\sigma$ are called \emph{involutions}, and
$\sigma$ acts as inversion on the abelianization of $G$ \cite{Bos91}. 

\begin{definition}
 A \emph{Schur-$\sigma$} group is a finitely generated pro-$p$ group $G$ with finite abelianization such that
\begin{enumerate}
 \item $d(G)=r(G)$ (then called just the \emph{rank} of $G$)
\item $G$ admits a GI-automorphism.
\end{enumerate}
\end{definition}
Koch and Venkov \cite{KV75} have shown that for an imaginary quadratic extension $K/\Q$, the group $G_K$ is a 
Schur-$\sigma$ group. The groups $G_K$ we are considering in the function field case are also Schur-$\sigma$ groups when $p\nmid q-1$. This follows by class field theory, Proposition~\ref{P:getGI} above, and the upper bound on $r(G_K)-d(G_K)$, namely $0$, due to
Shafarevich, given as Theorem 2.2 in \cite{HM01}. Note that $r(G_K)-d(G_K) \geq 0$ since $G_K^{ab}$ is finite and so the upper bound of $0$ yields $r(G_K)-d(G_K)=0$.

We will put a measure on the set of isomorphism classes of \ss groups
in order to state the Boston-Bush-Hajir heuristics.  For this, we
first need to define a $\sigma$-algebra (in the sense of measure
theory--not our automorphism $\sigma$) on this set.  Since
many infinite \ss groups are expected to occur as $G_K$ with density
$0$, it makes sense to focus on certain finite quotients of these
groups.

Any pro-$p$ group $G$ has a \emph{lower $p$-central series} defined as
$P_0(G):=G$ and for $n\geq 0$, we let $P_{n+1}(G)$ be the closed
subgroup generated by $[G,P_n(G)]$ and $P_n(G)^p$.  The groups
$P_0(G)\geq P_1(G) \geq P_2(G)\geq \cdots$ form a descending chain of
characteristic subgroups of $G$ called the lower $p$-central series.
The \emph{$p$-class} of a finite $p$-group $G$ is the smallest $c\geq
0$ for which $P_c(G)=\{1\}$.  Note that for a finitely generated pro-$p$ group
$G$, the successive quotients $P_n(G)/P_{n+1}(G)$ are finite abelian
groups of exponent $p$, and so in particular, if $P_c(G)=\{1\}$, then
$G$ must be finite. The lower $p$-central series and $p$-class can be
thought of as analogous to the lower central series and nilpotency
class, respectively.  Note that $P_1(G)$ is also the Frattini subgroup
$\Phi(G)$.

For a \pp group $G$, we define $Q_c(G):=G/P_c(G)$, the maximal
quotient of $G$ with $p$-class at most $c$.  So $Q_c(G_K)$ is the
Galois group of the maximal unramified $p$-extension of $K$ among
extensions of Galois group with $p$-class at most $c$.  Note that since
a \ss group $G$ (such as $G_K$) is finitely generated, we have that
$Q_c(G)$ is finite.  It may be that $Q_c(G)$ has $p$-class strictly
less than $c$: certainly when $G$ itself has $p$-class strictly less
than $c$, this happens, but in fact since the subquotients of the
lower $p$-central series for $G$ and for $Q_c(G)$ are the same up to
index $c$, this is the only way it can happen.

Let $\Omega$ be the $\sigma$-algebra on the set of isomorphism classes
of \ss groups generated by the sets
\begin{equation}\label{E:defineSigma}
\{G | Q_c(G)\isom \fG \}
\end{equation}
for each finite $p$-group $\fG$ and fixed $c$.  For example, we can fix a \ss group $G_0$ and
take the intersection over all $c$ of 
$\{G | Q_c(G)\isom Q_c(G_0)\}$
to see that $\Omega$ contains the singleton set containing the class of $G_0$.

We will next define a measure on the set of isomorphism classes of \ss
groups for a $\sigma$-algebra containing $\Omega$.  Any
\ss group of rank $g$ can be presented as a quotient of the free \pp
group $F_g$ on $g$ generators $x_1,\dots,x_g$ (with
GI-automorphism $\sigma(x_i)=x_i^{-1}$) by $g$
relations chosen from $X=\{s\in \Phi(F_g) | \sigma(s)=s^{-1}\}$.  Since $X$ is a closed
subset of the profinite group $F_g$, we have a natural profinite probability measure
$\mu$ on $X$ from the limit of the uniform measures on finite quotients of $F_g$, on the
$\sigma$-algebra generated by fibers of these quotients.

The Boston-Bush-Hajir probability measure $\mu_{BBH}$ will be given by
randomly selecting such relations.  However, this only gives a measure
for a fixed rank $g$ of \ss groups.  Since, however, the rank of a \ss
group is the rank of its abelianization (in fact, of the quotient of
the abelianization $G/\Phi(G)$, by the Burnside basis theorem), we can use
the Cohen-Lenstra heuristics to predict how often each rank $g$
occurs.  Let
$$
\mu_{CL}(g):=\sum_{G \textrm{ fin. ab., rk $g$ $p$-gp}} 
\mu_{CL}(G)= p^{-g^2}\prod_{k=1}^{g}(1-p^{-k})^{-2}\prod_{i=1}^{\infty}(1-p^{-i}).
$$

The above formula is from Theorem 6.3 of \cite{CL84}. Let $A$ be a set of isomorphism classes of rank $g$ \ss groups.  Then we define
$$
\mu_{BBH}(A):=\mu_{CL}(g)\mu(\{(r_1,\dots,r_g)\in X^g \,|\, F_g/\langle\langle r_1,\dots,r_g\rangle\rangle \in A \})
,
$$
whenever $\{(r_1,\dots,r_g)\in X^g \,|\, F_g/\langle \langle
r_1,\dots,r_g\rangle\rangle \in A \}$ is measurable, where the double
angle brackets denote the closed normal subgroup generated by the
elements.  We can think of this measure as generating a random group
by picking a rank $g$ according to the Cohen-Lenstra measure and then
independently creating a random \ss group of rank $g$ by taking the
quotient of the free \pp group $F_g$ on $g$ generators by $g$ randomly
chosen relations in $X$.  Note that this process does not necessarily
produce a \ss group, as there may be redundancy among the relations
and so the resulting group may not have relation rank $g$.  However,
such redundancy happens with probability $0$ (the abelianization would
be infinite, and as noted by Friedman-Washington
\cite{FW89}, this occurs with zero probability under $\mu_{CL}$, which is
induced on abelianizations from $\mu_{BBH}$ \cite[Theorem 2.20]{BBH16}).

Let $X_c=\{s\in \Phi(Q_c(F_g)) | \sigma(s)=s^{-1}\}$.  Note that $X_c$ is a finite set and has a uniform
discrete probability measure $\mu_c$ that pulls back to $\mu$ on $X$.  
If $\fG$ is a fixed finite $p$-group with $d(\fG)=g$, we define $\mu_{BBH,c}(\fG):=
\mu_{BBH}(\{G\,|\,Q_c(G)\isom \fG\})$, and then
$$
\mu_{BBH,c}(\fG)=\mu_{CL}(g)
\mu_c(\{(r_1,\dots,r_g)\in X_c^g \,|\, Q_c(F_g)/\langle \langle r_1,\dots,r_g\rangle \rangle\isom \fG\}).
$$
In particular
$\{G | Q_c(G)\isom \fG \}$ is measurable for $\mu_{BBH}$.

If $\fG\isom Q_c(G)$ for some \ss group $G$, we call $\fG$ a \emph{\shs
  group}. Note that a \shs
group is necessarily a finite $p$-group with a GI-automorphism (though
these conditions are not sufficient).  The \shs groups are exactly
those presented as $Q_c(F_g)/\langle \langle
r_1,\dots,r_g\rangle\rangle$ for some $r_1,\dots,r_g\in X_c$.  This is
because one can choose an irredundant lift of the relations from $X_c$
to $X$ to give a \ss group \cite{BBH16}. In particular, for any \shs group $G$ of
$p$-class $c$, we have that $\mu_{BBH,c}(G)>0.$

\subsection{Choice of GI-automorphisms}\label{S:chooseinv}
It might seem strange at first that we do not include the choice of
GI-automorphism with our data of a \ss group or \shs group.  However,
we have the following proposition.

\begin{proposition}[Section 1.3 of \cite{Hal34}]\label{P:ric}
  Any $2$ GI-automorphisms of a finitely generated pro-$p$ group $G$
  are conjugate in $\Aut(G).$
\end{proposition}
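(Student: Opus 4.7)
The plan is to reduce the statement to a Schur-Zassenhaus argument inside $\Aut(G)$. By the characterization of GI-automorphisms for pro-$p$ groups recalled at the start of \S 3, each $\sigma_i$ is an involution acting as inversion (hence as $-1$) on the Frattini quotient $G/\Phi(G)$. The case $G=1$ is immediate, so assume $G\neq 1$; then by the Burnside basis theorem $G/\Phi(G)\neq 1$, and in particular $-1\neq 1$ on $G/\Phi(G)$ because $p$ is odd.

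I would then introduce $A:=\Aut(G)$ together with its normal subgroup
\[
A_0 := \ker\bigl(A \to \Aut(G/\Phi(G))\bigr),
\]
and verify that $A_0$ is pro-$p$. The underlying classical fact is that for any finite $p$-group $P$ the kernel of $\Aut(P)\to \Aut(P/\Phi(P))$ is itself a $p$-group. Writing $G=\varprojlim_n G/P_n(G)$ along the lower $p$-central series, every continuous automorphism of $G$ preserves this characteristic filtration, and one checks that the natural map
\[
A_0 \longrightarrow \varprojlim_n \ker\bigl(\Aut(G/P_n(G))\to \Aut(G/\Phi(G))\bigr)
\]
is a bijection, realizing $A_0$ as an inverse limit of finite $p$-groups.

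With $A_0$ in hand, $\sigma_1$ and $\sigma_2$ lie in the same coset of $A_0$ in $A$ (they induce the same map on $G/\Phi(G)$), so $H:=A_0\langle\sigma_1\rangle = A_0\langle\sigma_2\rangle$ contains both, and $A_0\triangleleft H$ with $H/A_0\cong\Z/2\Z$. The subgroups $\langle\sigma_1\rangle$ and $\langle\sigma_2\rangle$ are then two order-$2$ complements to the pro-$p$ normal subgroup $A_0$ in the profinite group $H$, and since $p$ is odd, the profinite Schur-Zassenhaus theorem (Prop.~2.3.3 of \cite{Wil98}) gives that these complements are $A_0$-conjugate. A conjugation carrying $\langle\sigma_1\rangle$ onto $\langle\sigma_2\rangle$ must send the unique nontrivial element $\sigma_1$ to $\sigma_2$, so $\sigma_1$ and $\sigma_2$ are conjugate in $\Aut(G)$.

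The main obstacle is the foundational step of confirming that $A_0$ is genuinely a closed pro-$p$ subgroup of $\Aut(G)$ in its natural topology, and that the profinite version of Schur-Zassenhaus applies cleanly to the (not necessarily topologically finitely generated) group $H$. Once those technicalities are dispatched, the conjugacy is immediate from the coprimality of $|A_0|$ with $[H:A_0]=2$.
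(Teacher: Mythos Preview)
The paper does not supply its own proof of this proposition; it simply attributes the result to Hall \cite[Section~1.3]{Hal34} (where it is established for finite $p$-groups). Your argument via the pro-$p$ normal subgroup $A_0=\ker\bigl(\Aut(G)\to\Aut(G/\Phi(G))\bigr)$ together with the profinite Schur--Zassenhaus theorem is correct and is the standard modern route to this statement. The technical points you flag do go through: for a finitely generated pro-$p$ group $G$ the automorphism group $\Aut(G)$ is profinite, $A_0$ is the kernel of a continuous map to the finite group $\Aut(G/\Phi(G))$ and hence closed, and your inverse-limit description realizes $A_0$ as $\varprojlim_n \ker\bigl(\Aut(G/P_n(G))\to\Aut(G/\Phi(G))\bigr)$, an inverse limit of finite $p$-groups by the classical fact you cite. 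Wilson's Prop.~2.3.3 then applies to $H$ without any finite-generation hypothesis on $H$, since it only requires coprimality of $|A_0|$ and $[H:A_0]$ as supernatural numbers.
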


If $G$ and $H$ are finitely generated pro-$p$ groups, we define
$\Sur_{\sigma}(G,H)$ to be the continuous surjections from $G$ to $H$
that take some particular choice of GI-automorphism for $G$ to some
particular choice of GI-automorphism for $H$.  We define
$\Aut_\sigma(G)$ similarly.  These definitions of course depend on the
particular choice of GI-automorphisms, but in this paper we will be
concerned mostly with the size of these sets, and by
Proposition~\ref{P:ric} their sizes do not depend on these choices.

\subsection{Choice of generators}
The description of $\mu_{BBH}$ above actually gives a finer measure on
the set of isomorphism classes of \ss groups \emph{with} a choice of
GI-automorphism and minimal generating set inverted by that
automorphism.  We will later take advantage of this generating set,
though for simplicity we do not introduce notation for this finer
measure.

\section{Boston-Bush-Hajir moments}\label{S:moments}

We now determine the
moments of the measure $\mu_{BBH}$ as stated in Theorem~\ref{T:intromom}.
\begin{theorem}[Moments of $\mu_{BBH}$]\label{T:mom}
  Let $H$ be a finite $p$-group of $p$-class $c$ with a GI-automorphism $\sigma$. Then
$$ \int_G |\Sur_{\sigma}(G,H)| d\mu_{BBH}=\sum_{G \textrm{ \shs of $p$-class $c$}} \mu_{BBH,c}(G)| \Sur_\sigma(G,H)|=1 .$$
\end{theorem}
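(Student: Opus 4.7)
First, the equality $\int_G |\Sur_\sigma(G,H)|\, d\mu_{BBH} = \sum_{G \textrm{ \shs of $p$-class $c$}} \mu_{BBH,c}(G)\,|\Sur_\sigma(G,H)|$ is immediate: since $H$ has $p$-class $c$, every homomorphism $G\to H$ factors through $Q_c(G)$, so $|\Sur_\sigma(G,H)|$ depends only on the isomorphism class of $Q_c(G)$, and partitioning the integral by this class and applying the definition of $\mu_{BBH,c}$ gives the finite-sum form. To show the value equals $1$, I plan to unwind the integral using the random-relations description of $\mu_{BBH}$. Drawing $g$ with probability $\mu_{CL}(g)$ and then $r_1,\dots,r_g$ independently uniformly in $X_c$, a $\sigma$-equivariant surjection from $Q_c(F_g)/\langle\langle r_1,\dots,r_g\rangle\rangle$ onto $H$ is the same as a $\sigma$-equivariant surjection $\phi\colon Q_c(F_g)\twoheadrightarrow H$ with $\phi(r_i)=1$ for all $i$. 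Swapping the order of summation and using independence of the $r_i$ gives
$$\int_G |\Sur_\sigma(G,H)|\, d\mu_{BBH} = \sum_g \mu_{CL}(g) \sum_{\phi \in \Sur_\sigma(Q_c(F_g),H)} \left(\frac{|X_c \cap \ker\phi|}{|X_c|}\right)^{g}.$$

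The key technical step is to show $|X_c \cap \ker\phi|/|X_c| = 1/|X(H)|$ for every such $\phi$, where $X(H):=\{h \in \Phi(H) : \sigma(h)=h^{-1}\}$. The plan is to deduce this from two standard facts about odd-order groups with an involutive automorphism, both consequences of the vanishing of the non-abelian cohomology $H^1(\langle\sigma\rangle,-)$ in coprime characteristic: (i) $|A|=|A^+|\cdot|A^-|$ for every $\sigma$-invariant subgroup $A$, and (ii) every $\sigma$-equivariant surjection of odd-order groups restricts to a surjection on fixed-point subgroups. Applying (ii) to $\phi|_{\Phi(Q_c(F_g))}$ and $\phi|_{\Phi(Q_c(F_g))^+}$ gives the fiber sizes $|\ker\phi\cap\Phi(Q_c(F_g))|=|\Phi(Q_c(F_g))|/|\Phi(H)|$ and $|\ker\phi\cap\Phi(Q_c(F_g))^+|=|\Phi(Q_c(F_g))^+|/|\Phi(H)^+|$; then (i) applied to $A=\ker\phi\cap\Phi(Q_c(F_g))$ yields $|X_c\cap\ker\phi|=|\Phi(Q_c(F_g))^-|/|\Phi(H)^-|$, and the ratio collapses to $1/|X(H)|$. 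Next, to count $|\Sur_\sigma(F_g,H)|$, observe that the Lang--Steinberg-type map $L(h)=h\sigma(h)^{-1}$ is a surjection $H \twoheadrightarrow H^-$ with fibers the cosets of $H^+$, and its composition with the projection to $H/\Phi(H)$ is (up to the invertible factor $2$) the canonical projection; hence $H^-\twoheadrightarrow H/\Phi(H)$ is surjective with uniform fibers of size $|X(H)|$. Since a $\sigma$-equivariant surjection $F_g\to H$ is exactly a $g$-tuple in $H^-$ generating $H$ (detected on $H/\Phi(H)\cong(\Z/p)^{d(H)}$), this yields $|\Sur_\sigma(F_g,H)|=\prod_{i=0}^{d(H)-1}(p^g-p^i)\cdot|X(H)|^g$.

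Substituting back, the $|X(H)|^g$ factors cancel and the identity to verify reduces to
$$\sum_g \mu_{CL}(g)\prod_{i=0}^{d(H)-1}(p^g-p^i)=1.$$
This is precisely the classical Cohen--Lenstra moment for $(\Z/p)^{d(H)}$: for a rank-$g$ finite abelian $p$-group $G$, $|\Sur(G,(\Z/p)^{d(H)})|=\prod_{i=0}^{d(H)-1}(p^g-p^i)$, so the left side equals $\int_G|\Sur(G,(\Z/p)^{d(H)})|\,d\mu_{CL}$, which is $1$ by \cite[Lemma 8.2]{EVW1}. The principal obstacle is the group-theoretic input of the second paragraph: establishing the identity $|X_c\cap\ker\phi|/|X_c|=1/|X(H)|$ and the uniformity of the fibers of $H^-\to H/\Phi(H)$ in the potentially non-abelian setting. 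Both follow from coprime-action cohomology, but they must be applied carefully through the Frattini filtration rather than invoked wholesale.
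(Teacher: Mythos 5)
Your proposal is correct and follows essentially the same route as the paper: unwinding $\mu_{BBH}$ via the random-relations process, showing each relation lies in $\ker\phi$ with probability $1/|Y(\Phi(H))|$ via the coprime-action facts ($|A|=|A^+||A^-|$ and surjectivity on fixed points, which are the paper's Lemmas preceding the theorem), counting $\Sur_\sigma(F_g,H)$ via equidistribution of the inverted elements in $H/\Phi(H)$ (your map $h\mapsto h\sigma(h)^{-1}$ is the paper's $g\mapsto g^{-1}\sigma(g)$), and reducing to the abelian Cohen--Lenstra moment being $1$. The group-theoretic inputs you flag as the main obstacle are exactly the lemmas the paper proves, and your sketched applications of them through the Frattini filtration are valid.
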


Note the hypothesis that $\sigma$ is $GI$ on $H$ does not place any real restriction, because if we have a surjection $G\ra H$ that takes a GI-automorphism $\sigma_G$ on $G$ to any automorphism $\sigma_H$ on $H$, then $\sigma_H$ must also be GI.

Let $H$ be a finite $p$ group with an order $2$ automorphism
$\sigma$. We write $Z(H)=\{g\in H| \sigma(g)=g\}$ and $Y(H)=\{g\in H|
\sigma(g)=g^{-1}\}$.  This notation implicitly depends on $\sigma$.
We now prove several lemmas that will be used in the proof of
Theorem~\ref{T:mom}.
\begin{lemma}
  Let $G$ be a finite $p$-group with an order $2$ automorphism
  $\sigma$. Then $|G|=|Y(G)||Z(G)|$.
\end{lemma}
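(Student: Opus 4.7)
The plan is to form the semidirect product $H := G \rtimes \langle \sigma \rangle$, which has order $2|G|$, and to count the order-$2$ elements of $H$ in two different ways using Sylow theory. Since $p$ is odd (as fixed throughout the paper), $G$ itself contains no element of order $2$, so every order-$2$ element of $H$ lies in the nontrivial coset $G\sigma$. A direct computation in the semidirect product gives $(g\sigma)^2 = g\sigma(g)$, so $g\sigma$ has order $2$ if and only if $\sigma(g) = g^{-1}$, i.e., if and only if $g \in Y(G)$. Hence $H$ has exactly $|Y(G)|$ elements of order $2$.

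Next I would observe that every Sylow $2$-subgroup of $H$ has order $2$ and is determined by its unique non-identity element, so the number of Sylow $2$-subgroups of $H$ equals $|Y(G)|$. On the other hand, Sylow's theorem says this number also equals $[H : N_H(\langle \sigma \rangle)]$. A short calculation in the semidirect product (writing elements as $g\sigma^i$ and using $\sigma g \sigma^{-1} = \sigma(g)$) shows that $N_H(\langle \sigma \rangle) = C_H(\sigma)$, and that $g\sigma^i$ centralizes $\sigma$ exactly when $g \in Z(G)$; hence $|N_H(\langle \sigma \rangle)| = 2|Z(G)|$.

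Combining the two counts gives $|Y(G)| = 2|G|/(2|Z(G)|) = |G|/|Z(G)|$, which is the claimed identity. No step looks subtle; the one thing I would flag explicitly is that the oddness of $p$ is genuinely used to ensure that $G$ itself contributes no order-$2$ elements to $H$, without which the Sylow $2$-subgroups would no longer have order $2$ and the bijection between them and $Y(G)$ would break down.
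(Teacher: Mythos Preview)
Your argument is correct. The paper does not actually prove this lemma but simply cites Gorenstein's \emph{Finite Groups}, so there is no in-paper argument to compare against; your Sylow-theoretic count in the semidirect product $G \rtimes \langle\sigma\rangle$ is one of the standard ways to establish this classical fact, and every step checks out, including the point you flag about needing $p$ odd so that $G$ contributes no involutions.
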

\begin{proof}
This is Theorem 3.5 (p.180) of Chapter 5 of \cite{Gor07}.
\end{proof}

\begin{lemma}
  Let $G$ and $H$ be finite $p$-groups, each with an order $2$ automorphism
  $\sigma$, and let $\phi:G\ra H$ be a $\sigma$-equivariant surjection.
  Then $\phi : Z(G)\ra Z(H)$ is a surjection.
\end{lemma}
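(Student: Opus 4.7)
The plan is to show that every $\sigma$-fixed element of $H$ has a $\sigma$-fixed preimage in $G$, by exploiting the fact that $p$ is odd together with a parity argument on the action of $\sigma$ on fibers of $\phi$.

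First I would take $h \in Z(H)$ and choose any preimage $g_0 \in G$ with $\phi(g_0) = h$, which exists because $\phi$ is surjective. The fiber $F := \phi^{-1}(h)$ is a coset of $N := \ker\phi$. Because $\phi$ is $\sigma$-equivariant and $\sigma(h) = h$, the subgroup $N$ is $\sigma$-stable and the fiber $F$ is also $\sigma$-stable: for any $g \in F$, we have $\phi(\sigma(g)) = \sigma(\phi(g)) = \sigma(h) = h$. Thus $\sigma$ restricts to an action of $\langle\sigma\rangle$ on the finite set $F$.

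Next I would observe that $|F| = |N|$ is a power of $p$, hence odd (since $p$ is odd throughout the paper). The orbits of $\langle\sigma\rangle$ on $F$ have size $1$ or $2$, so if $f$ denotes the number of fixed points of $\sigma$ on $F$ and $o$ the number of size-$2$ orbits, then $|F| = f + 2o$. Reducing mod $2$ gives $f \equiv |F| \equiv 1 \pmod{2}$, so in particular $f \geq 1$. Pick any $g \in F$ with $\sigma(g) = g$; then $g \in Z(G)$ and $\phi(g) = h$. Finally, $\phi$ does send $Z(G)$ into $Z(H)$ by $\sigma$-equivariance, so we conclude $\phi\colon Z(G) \to Z(H)$ is surjective.

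There is no real obstacle here: once one notices that the fiber is a $\sigma$-stable set of odd cardinality, the classical fact that an involution on an odd set has a fixed point does all the work. The only point that requires a moment of care is verifying that $F$ is $\sigma$-stable as a set (not merely that $N$ is $\sigma$-stable as a subgroup), which follows immediately from $\sigma(h)=h$.
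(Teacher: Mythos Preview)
Your proof is correct. The argument that the fiber $\phi^{-1}(h)$ is a $\sigma$-stable set of odd $p$-power order, and hence must contain a $\sigma$-fixed point, is clean and complete.

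The paper takes a different, more structural route: it invokes the long exact sequence of (non-abelian) cohomology of $\langle\sigma\rangle$ associated to $1\to\ker\phi\to G\to H\to 1$, identifies the map $Z(G)\to Z(H)$ with $H^0(\langle\sigma\rangle,G)\to H^0(\langle\sigma\rangle,H)$, and observes that the obstruction to surjectivity lives in $H^1(\langle\sigma\rangle,\ker\phi)$, which vanishes by Schur--Zassenhaus since $|\langle\sigma\rangle|=2$ is coprime to $|\ker\phi|$. Your orbit-counting argument is essentially a hands-on proof of this $H^1$ vanishing in the specific case needed: the cocycle condition amounts to finding a $\sigma$-fixed point in the fiber, and your parity count supplies it directly. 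Your approach is more elementary and self-contained; the paper's approach situates the result in a standard cohomological framework and makes clearer why the coprimality hypothesis (here, $p$ odd) is exactly what is needed.
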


\begin{proof}
Associated to the exact sequence $1 \ra \ker(\phi) \ra G \ra H \ra 1$ is the exact sequence
$$ \dots \ra H^0(\langle\sigma\rangle,G) \ra H^0(\langle\sigma\rangle,H) \ra H^1(\langle\sigma\rangle,\ker(\phi)) \ra \dots $$  
The first and second terms are $Z(G)$ and $Z(H)$ respectively. The last term is $H^1(\langle\sigma\rangle,\ker(\phi))$, which vanishes by Schur-Zassenhaus since $p$ is odd.

\end{proof}

\begin{lemma}\label{L:kersize}
  Let $G$ and $H$ be finite $p$-groups, each with an order $2$ automorphism
  $\sigma$, and let $\phi:G\ra H$ be a $\sigma$-equivariant surjection
  with kernel $K$.  Then $Z(K) = K\cap Z(G)$ and $Y(K)=K \cap Y(G)$, and
  $|Y(K)|=|Y(G)|/|Y(H)|$.
\end{lemma}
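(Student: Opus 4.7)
The plan is to reduce the statement to a simple counting argument built on the two preceding lemmas. The first step is to observe that because $\phi$ is $\sigma$-equivariant, the kernel $K$ is $\sigma$-stable, so $\sigma$ restricts to an order-two (or trivial) automorphism of $K$ and the sets $Z(K)$, $Y(K)$ are well-defined. The identifications $Z(K)=K\cap Z(G)$ and $Y(K)=K\cap Y(G)$ are then immediate from the definitions: an element of $K$ is $\sigma$-fixed (resp. $\sigma$-inverted) precisely when it is $\sigma$-fixed (resp. $\sigma$-inverted) in $G$.

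For the size identity I would set up the following chain of equalities. By the first lemma in this block applied to $K$, $G$, and $H$, we have
\begin{equation*}
|K|=|Y(K)|\,|Z(K)|,\quad |G|=|Y(G)|\,|Z(G)|,\quad |H|=|Y(H)|\,|Z(H)|.
\end{equation*}
By the second lemma the restriction $\phi\colon Z(G)\to Z(H)$ is surjective, and its kernel is exactly $K\cap Z(G)=Z(K)$, so $|Z(G)|=|Z(K)|\,|Z(H)|$. Combined with $|G|=|K|\,|H|$ from the original surjection, this yields
\begin{equation*}
|Y(K)|\,|Z(K)| \;=\; \frac{|G|}{|H|} \;=\; \frac{|Y(G)|\,|Z(G)|}{|Y(H)|\,|Z(H)|} \;=\; \frac{|Y(G)|}{|Y(H)|}\,|Z(K)|.
\end{equation*}
Cancelling $|Z(K)|$ (which is nonzero, since $1\in Z(K)$) gives $|Y(K)|=|Y(G)|/|Y(H)|$.

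There is no real obstacle here; the argument is purely bookkeeping once the two previous lemmas are in hand. The only thing to be mindful of is that the second lemma must be applied to $G\twoheadrightarrow H$ (to get surjectivity on $Z$), and that the first lemma is applied three times to convert between $|G|$, $|H|$, $|K|$ and their $Y$- and $Z$-parts. The $\sigma$-stability of $K$, which is needed even to make sense of $Z(K)$ and $Y(K)$, is the only fact one must not forget to mention.
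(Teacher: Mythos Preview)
Your proposal is correct and follows essentially the same route as the paper: the first two claims are immediate, and the size identity is obtained by combining $|K|=|Y(K)||Z(K)|$ with $|K|=|G|/|H|$ and $|Z(K)|=|Z(G)|/|Z(H)|$ (the latter from the surjectivity lemma), then cancelling. Your write-up is slightly more explicit in noting that $K$ is $\sigma$-stable and that $Z(K)$ is the kernel of $\phi|_{Z(G)}$, but the substance is identical.
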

\begin{proof}
 The first two claims are clear.  Using the above two lemmas, we then observe: 
$$
|Y(K)|=\frac{|K|}{|Z(K)|}=\frac{|G|/|H|}{|Z(G)|/|Z(H)|}=\frac{|Y(G)|}{|Y(H)|}
$$

which proves the final claim.
\end{proof}

\begin{lemma}
  Let $H$ be a finite $p$-group with GI-automorphism $\sigma$.  Then
  the elements of $Y(H)$ are equidistributed in $H/\Phi(H)$.  That is,
  any two cosets in $H$ of $\Phi(H)$, when intersected with $Y(H)$ have
  the same number of elements.
\end{lemma}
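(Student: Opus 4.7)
The plan is to exhibit a natural transitive action of $H$ on $Y(H)$ that is compatible, via the inclusion $Y(H)\hookrightarrow H$ followed by the projection to $H/\Phi(H)$, with a transitive action of $H$ on $H/\Phi(H)$. Once we have an equivariant map between two transitive $H$-sets, it is automatically surjective with all fibers of equal size, which is exactly the statement we want.

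The action of $H$ on $Y(H)$ I have in mind is
\[
h\cdot y \;:=\; h\,y\,\sigma(h)^{-1},
\]
which is nothing but the conjugation action in the semidirect product $H\rtimes\langle\sigma\rangle$ of $(h,1)$ on $(y,\sigma)$. A short direct calculation, using $\sigma^2 = 1$ and $\sigma(y)=y^{-1}$, gives $\sigma(hy\sigma(h)^{-1}) = \sigma(h)\,y^{-1}\,h^{-1} = (hy\sigma(h)^{-1})^{-1}$, so the action does preserve $Y(H)$; associativity is routine. The stabilizer of $y=1$ consists exactly of those $h\in H$ with $h\sigma(h)^{-1}=1$, i.e.\ $h\in Z(H)$. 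Combined with the identity $|H|=|Y(H)|\,|Z(H)|$ from the first lemma of this section, the orbit of $1$ has size $|Y(H)|$, so the action is transitive on $Y(H)$.

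Next I push the action down to $H/\Phi(H)$. Since $\sigma$ is a GI-automorphism and $H/\Phi(H)$ is the (abelian) Frattini quotient, $\sigma$ acts on $H/\Phi(H)$ as inversion, so $\overline{\sigma(h)^{-1}} = \bar{h}$, and the induced formula is $h\cdot \bar{y} = \bar{h}^{2}\,\bar{y}$. Because $p$ is odd and $H/\Phi(H)$ is elementary abelian of exponent $p$, the squaring map $\bar h\mapsto \bar h^{2}$ is a bijection on $H/\Phi(H)$, so as $h$ varies over $H$ the element $\bar{h}^2$ sweeps out all of $H/\Phi(H)$, and this induced $H$-action on $H/\Phi(H)$ is transitive. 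The map $Y(H)\to H/\Phi(H)$ given by inclusion followed by projection is manifestly $H$-equivariant, so being an equivariant map between two transitive $H$-sets it is surjective with all fibers of the same size $|Y(H)|/|H/\Phi(H)|$. Since the fiber over $\bar{g}$ is precisely $g\Phi(H)\cap Y(H)$, this is the desired equidistribution.

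I do not anticipate a serious obstacle: once the conjugation-in-the-semidirect-product viewpoint is in place, each step is a one-line verification. The only place the odd-prime hypothesis enters is in the bijectivity of squaring on $H/\Phi(H)$, which is essential, since for $p=2$ the action on $H/\Phi(H)$ would collapse and the conclusion would be false.
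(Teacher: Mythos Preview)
Your proof is correct and is essentially the paper's argument recast in the language of group actions: your orbit map $h\mapsto h\cdot 1=h\,\sigma(h)^{-1}$ is (after replacing $h$ by $h^{-1}$) exactly the paper's map $f(g)=g^{-1}\sigma(g)$, your transitivity on $Y(H)$ is their statement that $f$ has fibers of size $|Z(H)|$, and both reductions to $H/\Phi(H)$ come down to the bijectivity of squaring on an odd elementary abelian group. The equivariant-map-between-transitive-sets packaging is tidy, but the underlying computation is the same.
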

\begin{proof}
  We consider the maps of sets $f:H \ra Y(H)$ given by
  $f(g)=g^{-1}\sigma(g)$ and $\pi: Y(H) \ra H/\Phi(H)$ the composition
  of the inclusion and quotient maps $Y(H) \ra H \ra H/\Phi(H)$.  

  Then the composition $\pi f : H\ra H/\Phi(H)$ sends $g\mapsto
  g^{-2}$ since $\sigma$ acts by inversion on $H/\Phi(H)$.  This is a
  homomorphism since $H/\Phi(H)$ is abelian, and a surjection since
  $H/\Phi(H)$ has odd order.  Thus the fibers of $\pi f$ are of equal
  size.  Further, the fibers of $f$ are cosets of $Z(H)$ and thus are
  also of equal size.  Also, since for any $g \in H$, $g\Phi(H) \cap Y(H)
  = \pi^{-1}(g)$, it suffices to show the fibres of $\pi$ have equal
  sizes, which now follows.
\end{proof}

\begin{lemma}\label{L:countsur}
 Let $H$ be a finite $p$-group of generator rank $r$ with a GI-automorphism $\sigma$.  Then
$$
|\Sur_\sigma(F_d,H)|=\frac{|Y(H)|^d(p^d-p^{r-1})\cdots(p^d-1)}{p^{dr}}
$$
\end{lemma}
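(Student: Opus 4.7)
The plan is to count $\sigma$-equivariant surjections by first counting $\sigma$-equivariant homomorphisms and then restricting via the Burnside basis theorem, using the equidistribution lemma to transfer the count from $Y(H)$ to $H/\Phi(H)$.

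First I would observe that since $F_d$ is the free pro-$p$ group with GI-automorphism $\sigma(x_i)=x_i^{-1}$ for its distinguished generators, a continuous $\sigma$-equivariant homomorphism $\phi: F_d \to H$ is determined freely by an arbitrary choice of $(\phi(x_1),\dots,\phi(x_d)) \in H^d$ subject only to the constraint $\sigma(\phi(x_i)) = \phi(x_i)^{-1}$, i.e., $\phi(x_i) \in Y(H)$. Hence $|\Hom_\sigma(F_d,H)| = |Y(H)|^d$ and $\sigma$-equivariant homomorphisms correspond bijectively to tuples in $Y(H)^d$.

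Next I would invoke the Burnside basis theorem: a tuple $(y_1,\dots,y_d) \in Y(H)^d$ gives a surjection onto $H$ if and only if the images $\bar y_1, \dots, \bar y_d$ in $H/\Phi(H) \cong \F_p^r$ span this $\F_p$-vector space. The number of spanning $d$-tuples in $\F_p^r$ is the standard count
$$\prod_{i=0}^{r-1}(p^d - p^i) = (p^d-1)(p^d-p)\cdots(p^d-p^{r-1}).$$

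Now I would apply the preceding equidistribution lemma, which says the elements of $Y(H)$ are equidistributed among the $p^r$ cosets of $\Phi(H)$ in $H$, so each coset meets $Y(H)$ in exactly $|Y(H)|/p^r$ elements. Consequently, for each $d$-tuple of cosets in $(H/\Phi(H))^d$, the number of $Y(H)^d$-tuples mapping to it is $(|Y(H)|/p^r)^d = |Y(H)|^d/p^{dr}$. Multiplying this uniform fiber size by the number of spanning tuples in $(H/\Phi(H))^d$ yields
$$|\Sur_\sigma(F_d,H)| = \frac{|Y(H)|^d}{p^{dr}} \cdot \prod_{i=0}^{r-1}(p^d - p^i),$$
which is the desired formula. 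The only nontrivial input beyond bookkeeping is the equidistribution lemma, but that has already been established, so there is no real obstacle; the main point is just to assemble these three facts.
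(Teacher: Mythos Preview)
Your proposal is correct and follows essentially the same approach as the paper: count $\sigma$-equivariant homomorphisms as $|Y(H)|^d$, reduce surjectivity to spanning $H/\Phi(H)\cong\F_p^r$ via the Burnside basis theorem, and use the equidistribution lemma to compute the proportion of surjective maps.
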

\begin{proof}
  A homomorphism $F_d\ra H$ is $\sigma$-equivariant if and only if it sends each of the
  $d$ generators of $F_d$ to an element of $Y(H)$, and so there are
  $|Y(H)|^d$ such maps.  By the Burnside basis theorem, such a
  homomorphism is surjective if and only if its composition with the quotient map is surjective to
  $H/\Phi(H)$.  Since the elements of $Y(H)$ are equidistributed in
  $H/\Phi(H)$, the proportion of $\sigma$-equivariant homomorphisms
  $F_d\ra H$ that are surjective is the same as the proportion of
  $d$-tuples from $H/\Phi(H)\isom (\Z/p\Z)^r$ that span this
  $\Z/p\Z$-vector space, which is easily computed to be
  $(p^d-p^{r-1})\cdots(p^d-1)/p^{dr}.$
\end{proof}

\begin{proof}[Proof of Theorem~\ref{T:mom}]
  Since a surjection from $G$ to $H$ factors through $Q_c(G)$, we see
  that $f(G) = |\Sur_{\sigma}(G,H)|$ is in fact a measurable function
  and that the first equality is by definition of the two measures.

  Let $H$ have generator rank $r$.  The random group $G$ is
  constructed first by picking a random generator rank $d$ for $G$
  according to the Cohen-Lenstra measure, and then taking a random
  quotient of $F_d$.  Certainly, any surjection $G\ra H$ lifts
  uniquely to a surjection $F_d \ra H$.  From Lemma~\ref{L:countsur}
  we see there are $|Y(H)|^d(p^d-p^{r-1})\cdots(p^d-1)/p^{dr}$
  $\sigma$-equivariant surjections $F_d \ra H$. A surjection $\phi
  :F_d \ra H$ factors through $G$ if and only if the $d$ random
  relations in $Y(\Phi(F_d))$ that present $G$ are in $\ker(\phi)$,
  the probability of which we now compute.  Since $H$ is $p$-class
  $c$, we may equivalently take the random relations in
  $Y(\Phi(F_d)/P_c(F_d)).$

  Let $F:=F_d/P_c(F_d)$.  The probability that a random relation in
  $X_c=Y(\Phi(F))$ is in $\ker(\phi)$ is $|\ker(\phi)\cap
  Y(\Phi(F))|/|Y(\Phi(F))|.$ Applying Lemma~\ref{L:kersize} to the
  surjection $\phi: \Phi(F) \ra \Phi(G)$, we see that $|\ker(\phi)\cap
  Y(\Phi(F))|/|Y(\Phi(F))|=|Y(\Phi(G))|^{-1}.$ Also, applying
  Lemma~\ref{L:kersize} to the quotient $G \ra G/\Phi(G)$, we have
  that $|Y(\Phi(G))|=|Y(G)|/p^r$, since $\sigma$ acts on all of
  $G/\Phi(G)$ by inversion.  Thus, the probability that $d$ random
  relations are in $\ker(\phi)$, and so the map $\phi$ factors through the
  random $G$, is $p^{dr}/|Y(H)|^d.$ 

  Multiplying by the number of $\sigma$-equivariant surjections $F_d
  \ra H$, we find that among generator rank $d$ groups $G$, the
  expected number of $\sigma$-equivariant surjections to $H$ is
  $(p^d-p^{r-1})\cdots(p^d-1)$, which is the number of surjections
  from a rank $d$ abelian $p$-group to $(\Z/p\Z)^r$.  Thus the
  expected number of $\sigma$-equivariant surjections is $\sum_{d\geq
    0} \mu_{CL}(d) (p^d-p^{r-1})\cdots(p^d-1)=\sum_A \mu_{CL}(A)
  |\Sur(A,(\Z/p\Z)^r)|=1$, by the moments formula for the
  Cohen-Lenstra measure.
\end{proof}

In fact, we will see in Theorem~\ref{T:momdet} that the moments
where $H$ is a \shs group characterize $\mu_{BBH}$ as a measure on
$\Omega$.  
At each $p$-class, showing the moments characterize the measure amounts to inverting an 
infinite-dimensional matrix.  Our method to invert this matrix can be seen as a generalization of the method of \cite[Lemma 8.2]{EVW1},
which proves that the moments characterize the Cohen-Lenstra measure on finite abelian $p$-groups.  
First we need an
infinite-dimensional linear algebra lemma, since our infinite matrices are not quite as simple as those in \cite[Lemma 8.2]{EVW1}. 

\begin{lemma}\label{L:la}
Let $a_{i,j}$ be non-negative real numbers indexed by pairs of natural numbers $i,j$, such that for all
$i$ we have $a_{i,i}=1$, and also $\sup_i \sum_{j} a_{ij}<2$.  Let $x_j,y_j$ be non-negative reals
indexed by natural numbers $j$.  If for all $i,$
$$
\sum_j a_{i,j} x_j=\sum_j a_{i,j}y_j =1,
$$ 
then $x_j=y_j$ for all $j$.
\end{lemma}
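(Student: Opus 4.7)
The plan is to study the difference $z_j := x_j - y_j$, which by linearity of the hypothesis satisfies $\sum_j a_{i,j} z_j = 0$ for every $i$; the goal is to show $z \equiv 0$. The first step is to observe that $z$ is automatically bounded. Because $a_{i,j}\geq 0$ and $a_{i,i}=1$, the equation $\sum_j a_{i,j} x_j = 1$ forces $x_i \leq 1$, and analogously $y_i \leq 1$; since both are non-negative, this gives $|z_j|\leq 1$ for every $j$. In particular $M := \sup_j |z_j|$ is finite, and each series $\sum_j a_{i,j} z_j$ converges absolutely because row sums are uniformly bounded.

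Next I would recast the hypothesis $K := \sup_i \sum_j a_{i,j} < 2$ as a strict, uniform diagonal dominance: since $a_{i,i}=1$,
$$c := \sup_i \sum_{j\neq i} a_{i,j} \;=\; K - 1 \;<\; 1.$$
Rewriting the identity for $z$ as $z_i = -\sum_{j\neq i} a_{i,j} z_j$ and applying the triangle inequality,
$$|z_i| \;\leq\; \sum_{j\neq i} a_{i,j} |z_j| \;\leq\; M \sum_{j\neq i} a_{i,j} \;\leq\; c M$$
for every $i$. Taking the supremum over $i$ yields $M \leq c M$; since $c<1$, this forces $M = 0$, so $x_j = y_j$ for all $j$.

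There is no serious obstacle here: the whole argument rests on recognizing that the row-sum hypothesis is precisely strict diagonal dominance with uniform constant $c<1$, which lets a single contraction-type estimate close the proof. Non-negativity of $x_j$ and $y_j$ enters in exactly one place, namely to supply the a priori bound $|z_j|\leq 1$ that guarantees $M$ is finite and makes the contraction meaningful.
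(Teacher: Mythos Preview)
Your proof is correct and is essentially identical to the paper's own argument: both bound $|x_i-y_i|\le 1$ from non-negativity and $a_{i,i}=1$, set $M=\sup_j|x_j-y_j|$, rewrite the identity as $x_i-y_i=-\sum_{j\neq i}a_{i,j}(x_j-y_j)$, and conclude $M\le (K-1)M$ with $K-1<1$.
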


\begin{proof}
Note that  $x_i = a_{ii} x_i \leq \sum_j a_{i,j} x_j \leq  1$.  Similarly $0 \leq y_i \leq 1$.  Let $d_i = x_i - y_i$.
Let $a = \sup_i \sum_j a_{ij} < 2$.  Let $s =\sup_i |d_i|$, so $0 \leq s \leq 1$.
For each $i$, we have $\sum a_{ij} d_j = 0$, so $d_i = - \sum_{j\ne i} a_{ij} d_j$.
So, $|d_i| \leq  \sum_{j\ne i} a_{ij} |d_j|$. Taking the supremum over $i$ yields $s \leq  (a-1) s$.
Since $a-1 < 1$, so $s=0$.  Thus $x_i = y_i$ for all $i$.

\end{proof}

Next, we will prove a formula for $\mu_{BBH}(\{ G \ |\ Q_c(G)\isom \fG\})$ for a given \shs group $\fG$.
The formula combines Theorems 2.25 and 2.29 of \cite{BBH16}, which are subject to a further conjecture called KIP, but we prove below that the combined formula is not conjectural.  For the formula, we will need one further invariant of $p$-groups.  For a finite $p$-group $G$ of $p$-class $c$ presented as $F/R$, where $F$ is a free group of $d(G)$ generators, then $h(G)$ is defined to be the dimension of the quotient of $R$ by the topological closure of the subgroup $R^p[F,R]P_c(F)$ (by \cite{OBrien} and \cite[Remark 2.4]{BBH16} the quantity does not depend on the choice of presentation).

Alternatively, the $p$-groups of $p$-class $\le c$ form a variety of groups whose free objects are precisely the groups $Q_c(F_d)$. For a group $G$ in this variety, we can let $h_c(G)$ be the number of relators required to present $G$ in this variety. 
If $G$ is $p$-class $c$, then $h_c(G)=h(G)$ and if $G$ is $p$-class smaller than $c$, then $h_c(G) = r(G)$.

\begin{lemma}\label{L:BBHformula}
Fix a $c$.  Let $g=d(G)$ and $h=h_c(G)$. We have
$$
\frac{\mu_{BBH}(\{ G \ |\ Q_c(G)\isom \fG\})}{\mu_{CL}(g)}=
\frac{p^{g^2}}{|\Aut_\sigma(G)|}\prod_{k=1}^g (1-p^{-k})  \prod_{k=1+g-h}^g (1 - p^{-k})
$$


\end{lemma}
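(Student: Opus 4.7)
The plan is to unwind the definition of $\mu_{BBH,c}(\fG)$ and reduce it to a counting problem on $X_c^g$. Writing $F := Q_c(F_g)$, by definition $\mu_{BBH,c}(\fG)/\mu_{CL}(g)$ equals the fraction of tuples $(r_1,\dots,r_g) \in X_c^g$ for which $F/\langle\langle r_1,\dots,r_g\rangle\rangle \isom \fG$. I group such tuples by the induced $\sigma$-equivariant surjection $\phi:F \to \fG$: each isomorphism class of quotients corresponds to $|\Aut_\sigma(\fG)|$ surjections (using Proposition~\ref{P:ric} to handle the choice of GI-automorphism on the target), and conversely each $\phi \in \Sur_\sigma(F,\fG)$ together with a $g$-tuple in $Y(\ker\phi)^g$ that normally generates $\ker\phi$ produces such a tuple. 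Since $d(\fG)=g=d(F)$ forces $\ker\phi \subseteq \Phi(F)$, such tuples are automatically in $X_c^g$. Lemma~\ref{L:countsur} supplies $|\Sur_\sigma(F,\fG)|$ in closed form (noting that continuous homomorphisms $F\to\fG$ are in bijection with those from $F_g$), so the problem reduces to computing the number $N$ of normally generating $g$-tuples in $Y(R)^g$ for $R := \ker\phi$.

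To compute $N$, I introduce the ``relation Frattini'' quotient $M := R/\overline{R^p[F,R]}$, an elementary abelian $p$-group of dimension $h = h_c(\fG)$. By the Burnside basis principle for normal closures in pro-$p$ groups, $r_1,\dots,r_g \in R$ normally generate $R$ in $F$ iff their images in $M$ span $M$. The key structural observation is that $\sigma$ acts on $M$ as inversion, i.e., $Y(M)=M$: because $\fG$ is a \shs group, some $g$-tuple in $X_c$ already normally generates $R$, and the images of such a tuple in $M$ both lie in $Y(M)$ and span $M$. Once $Y(M)=M$ is in place, the equidistribution argument used earlier for $Y(H)$ inside $H/\Phi(H)$ applies verbatim to the pair $(R,M)$, since $g\mapsto g^{-2}$ remains a surjective endomorphism of $M$ (it has odd $p$-power order). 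Hence $Y(R)$ is equidistributed among cosets of $\overline{R^p[F,R]}$, and $N = |Y(R)|^g$ times the fraction of $g$-tuples in $M^g$ that span $M$, which equals $\prod_{k=0}^{h-1}(p^g-p^k)/p^{gh}$.

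Finally, Lemma~\ref{L:kersize} applied to $\phi:F\to\fG$ gives $|Y(R)|=|Y(F)|/|Y(\fG)|$, and applied to $F\to F/\Phi(F)\isom(\Z/p)^g$ (on which $\sigma$ acts as inversion) gives $|X_c|=|Y(\Phi(F))|=|Y(F)|/p^g$. Substituting into $\mu_{CL}(g)\cdot|\Sur_\sigma(F,\fG)|\cdot N\,/\,(|\Aut_\sigma(\fG)|\cdot|X_c|^g)$, the factors of $|Y(\fG)|$ cancel between $|\Sur_\sigma(F,\fG)|$ and $|Y(R)|^g$, while the $|Y(F)|^g$ factors combine with the $p^{-g^2}$ from $|X_c|^{-g}$ to leave a clean $p^{g^2}$. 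Extracting $p^j(1-p^{-j})$ from each factor $(p^j-1)$ appearing in the surjection count and in $\prod_{k=0}^{h-1}(p^g-p^k)$, then checking that the accumulated powers of $p$ collapse against $p^{gh}$, yields precisely $\frac{p^{g^2}}{|\Aut_\sigma(\fG)|}\prod_{k=1}^g(1-p^{-k})\prod_{k=1+g-h}^g(1-p^{-k})$. The main conceptual obstacle is the verification that $Y(M)=M$; everything afterward is bookkeeping governed by the size lemmas already proved.
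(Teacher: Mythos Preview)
Your approach is essentially the same as the paper's: both decompose the count of tuples in $X_c^g$ according to the $\sigma$-invariant kernel $R$ (equivalently, the $\sigma$-equivariant surjection $\phi$), use Lemma~\ref{L:countsur} to count the surjections, and then count tuples in $Y(R)^g$ that normally generate $R$ via the Burnside-type principle and an equidistribution of $Y(R)$ across the fibers of $R \to M$. Your final bookkeeping with Lemma~\ref{L:kersize} is correct and matches the paper's computation that $|X_c| = |Y(\fG)||Y(R)|p^{-g}$.

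There is, however, a circularity in your justification of the step $Y(M)=M$. You argue that since $\fG$ is a Schur $\sigma$-ancestor, ``some $g$-tuple in $X_c$ already normally generates $R$,'' and hence their images span $M$ while lying in the subspace $Y(M)$. But the Schur $\sigma$-ancestor hypothesis only guarantees that \emph{some} $\sigma$-invariant kernel with quotient $\fG$ is normally generated by elements of $X_c$; it does not immediately say this for the particular $R=\ker\phi$ you are examining. Since you then sum your formula for $N$ over \emph{all} $\phi\in\Sur_\sigma(F,\fG)$, you are implicitly assuming $Y(M_R)=M_R$ for every $\sigma$-invariant $R$ with $F/R\simeq\fG$, which your argument does not establish. (Interestingly, the paper's proof uses the same fact---in the line ``since $V$ is abelian, $f\circ r = -2r$,'' which amounts to $\sigma=-1$ on $V$---without explicit justification either.) One way to close the gap is to show that $\Aut_\sigma(F)$ acts transitively on the $\sigma$-invariant kernels with quotient $\fG$, so that the property $Y(M)=M$ transfers from one kernel to all of them; alternatively, one can argue directly that $\sigma$ acts as $-1$ on $R/\overline{R^p[F,R]}$ for any $\sigma$-invariant $R\subset\Phi(F)$.
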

\begin{proof}


Let $F_c=Q_c(F_g)$.
We need to compute the sum of the probabilities that a given $g$-tuple of relations $v \in X_c^g$ generates $\overline{R}$ as a normal subgroup of $F_c$, where $\overline{R}$ runs over all normal subgroups of $F_c$ with quotient $G$. The key thing to note here is that since each element of $X_c$ is inverted by $\sigma$, any subgroup
generated by elements of $X_c$ is $\sigma$-invariant, as is the normal closure of such a subgroup. 
Thus if $\overline{R}$ is a normal subgroup of $F_c$ that is not $\sigma$-invariant, then the probability that is  generated as a normal subgroup by relations from $X_c$ is $0$.
In \cite{BBH16}, the conjectural property KIP (kernel invariance property) was assumed to ensure that every normal subgroup with quotient $G$ is $\sigma$-invariant.  We do not assume this, since by the above remark we can restrict our attention to the set of $\sigma$-invariant normal subgroups with quotient $G$.

The number of $\sigma$-invariant normal subgroups of $F_c$ with quotient $G$ is $|\Sur_\sigma(F_c,G)|/|\Aut_\sigma(G)|$, by counting the quotient maps and dividing by how often maps give isomorphic quotients.
(There are similarly $|\Sur(F_c,G)|/|\Aut(G)|$ normal subgroups with quotient $G$, but if there are any that are not $\sigma$-invariant we have already seen they have $0$ probability of being generated by our relations in $X_g$.) The probability that a $g$-tuple of relations $v \in X_c^g$ generates a $\sigma$-invariant $\overline{R}$ as a normal subgroup can be computed by the earlier methods of \cite{BBH16}. We give a slightly alternative treatment here.

First note that by Lemma 4.6, $|\Sur_\sigma(F_c,G)| = |Y(G)|^g \prod_{k=1}^g (1-p^{-k})$, since every such surjection from the free pro-$p$ group $F_g$ on $g$ generators factors through $F_c$. As for the probability that $v \in X_c^g$ normally generates $\overline{R}$, this happens if and only if its image generates the  $\F_p$-vector space $V = \overline{R}/\overline{R}^\ast$, where $R$ is the preimage of $\overline{R}$ in $F_g$, $R^\ast$ is the topological closure of $R^p[F_g,R]$, and $\overline{R}^\ast = P_c(F_g)R^\ast/P_c(F_g)$ \cite[Proposition 2.8]{gruenberg1976relation}.
When $G$ is $p$-class $c$, the dimension of $V$ is $h$ (by definition of $h$).  
When $G$ is $p$-class $<c$, we have $P_{c-1}(F_g)\sub R$ and so $P_c(F_g)$ is a subgroup of $R^*$.  Then $V=R/R^*$, which has dimension $r(G)$.  Let $s=\dim V$, which we have just determined in each case.
The number of $g$-tuples generating $V$ is $\prod_{k=1}^s (p^g - p^{s-k})$ and so we just need the size of the intersection of $X_c$ with a fiber of the quotient map $r: \overline{R} \rightarrow V$. 

We claim each of these has $|\overline{R}^\ast|/|Z(\overline{R})|$ elements. This follows by considering the map $f$ of Lemma 4.5, defined by $f(g)=g^{-1}\sigma(g)$. Since $V$ is abelian, $f \circ r = -2r$, whose fibers have the same size as those of $r$, namely $|\overline{R}^\ast|$, since $p$ is odd. On the other hand, $f \circ r = r \circ f$, the size of the fibers of which are the size of those of $r$ times those of $f$. This latter term is $|Z(\overline{R})|$ by Lemma 4.2. Putting these facts together establishes the claim.

To recap, the desired measure is the sum over $|Y(G)|^g \prod_{k=1}^g (1-p^{-k})/|\Aut_\sigma(G)|$ terms of the number of $v$ in $X_c^g$ normally generating each $\overline{R}$, which we just found to be $\prod_{k=1}^s (p^g - p^{s-k})( |\overline{R}^\ast|/|Z(\overline{R})|)^g$, divided by the total number of $v$, namely $|X_c|^g$. In other words,
$$ \prod_{k=1}^s (p^g - p^{s-k})\prod_{k=1}^g (1-p^{-k})  
\frac{
(|\overline{R}^\ast|/|Z(\overline{R})|)^g|Y(G)|^g}{|\Aut_\sigma(G)||X_c|^g} $$

It remains to show that $|Y(G)||\overline{R}^\ast|/(|Z(\overline{R})||X_c|) = p^{g-s}$. This follows from Lemma 4.4, which says that $|Y(F_c)| = |Y(G)||Y(\overline{R})|$ and $|Y(F_c)| = |Y(\Phi(F_c)) ||Y(F_c/\Phi(F_c))| = |X_c|p^g$. Thus, $|X_c| =  |Y(G)||Y(\overline{R})|p^{-g}$. Combining this with $|Y(\overline{R})||Z(\overline{R})|=|\overline{R}|$ (Lemma 4.2) and $|\overline{R}|/|\overline{R}^\ast| = p^s$ gives the result.
\end{proof}

\begin{theorem}[Moments characterize $\mu_{BBH}$]\label{T:momdet}
Let $\nu$ be a measure on $\Omega$ such that for every
\shs group $H$,
$$ \int_G |\Sur_{\sigma}(G,H)| d\nu=1 .$$
Then $\nu=\mu_{BBH}$.
\end{theorem}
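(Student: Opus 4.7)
The plan is to translate the moment hypothesis into an infinite upper-triangular linear system indexed by \shs groups of $p$-class at most $c$, and apply Lemma~\ref{L:la} to conclude uniqueness.

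The $\sigma$-algebra $\Omega$ is generated by the family of sets $\{G : Q_c(G) \cong \mathfrak{P}\}$, as $c \geq 0$ and finite $p$-groups $\mathfrak{P}$ vary, and this family is closed under intersection. Taking $H = 1$ in the moment hypothesis gives total mass $1$, so $\nu$ is a probability measure, as is $\mu_{BBH}$; by Dynkin's $\pi$-$\lambda$ theorem it therefore suffices to show $\nu(\{Q_c(G) \cong \mathfrak{P}\}) = \mu_{BBH}(\{Q_c(G) \cong \mathfrak{P}\})$ for every $c$ and every \shs group $\mathfrak{P}$ of $p$-class $\leq c$ (both sides vanish on non-\shs $\mathfrak{P}$). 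Fix such a $c$ and enumerate the countably many such $\mathfrak{P}$ as $\mathfrak{P}_1, \mathfrak{P}_2, \dots$ in order of non-decreasing $|\mathfrak{P}_i|$; set $x_j := \nu(\{Q_c(G) \cong \mathfrak{P}_j\})$ and $y_j := \mu_{BBH}(\{Q_c(G) \cong \mathfrak{P}_j\})$.

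Any $\sigma$-equivariant surjection from $G$ onto a group of $p$-class $\leq c$ factors through $Q_c(G)$, so the moment hypothesis at $H = \mathfrak{P}_i$ reads
\[
\sum_j |\Sur_\sigma(\mathfrak{P}_j, \mathfrak{P}_i)|\, x_j \;=\; 1 \;=\; \sum_j |\Sur_\sigma(\mathfrak{P}_j, \mathfrak{P}_i)|\, y_j .
\]
Because $|\Sur_\sigma(\mathfrak{P}_j, \mathfrak{P}_i)| = 0$ unless $\mathfrak{P}_j$ $\sigma$-equivariantly surjects onto $\mathfrak{P}_i$ (forcing $j \geq i$), and $|\Sur_\sigma(\mathfrak{P}_i, \mathfrak{P}_i)| = |\Aut_\sigma(\mathfrak{P}_i)|$, the system is upper-triangular with non-zero diagonal. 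Rescaling by $\tilde x_j := |\Aut_\sigma(\mathfrak{P}_j)|\, x_j$, $\tilde y_j := |\Aut_\sigma(\mathfrak{P}_j)|\, y_j$, and $a_{i,j} := |\Sur_\sigma(\mathfrak{P}_j, \mathfrak{P}_i)|/|\Aut_\sigma(\mathfrak{P}_j)|$ produces the system $\sum_j a_{i,j}\, \tilde x_j = \sum_j a_{i,j}\, \tilde y_j = 1$ with $a_{i,i} = 1$ and non-negative entries and unknowns.

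The main obstacle is verifying the hypothesis $\sup_i \sum_j a_{i,j} < 2$ of Lemma~\ref{L:la}. I would split the row sum by generator rank $g = d(\mathfrak{P}_j)$: each $A_g^c := \{\mathfrak{P}_j : d(\mathfrak{P}_j) = g\}$ is a finite collection of quotients of $Q_c(F_g)$, the bound $|\Sur_\sigma(\mathfrak{P}_j, \mathfrak{P}_i)| \leq |Y(\mathfrak{P}_i)|^g$ (obtained by pulling back along a surjection $F_g \twoheadrightarrow \mathfrak{P}_j$ and invoking Lemma~\ref{L:countsur}) controls the numerator on $A_g^c$, and the weighted count $\sum_{\mathfrak{P}_j \in A_g^c} 1/|\Aut_\sigma(\mathfrak{P}_j)|$ admits an orbit-counting expression in terms of the $\Aut_\sigma(Q_c(F_g))$-action on the $\sigma$-invariant normal subgroups of $Q_c(F_g)$ with rank-$g$ quotient. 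These estimates combine to give a Cohen-Lenstra-style geometric series in $g$ comparable to $\sum_g 1/|\GL_g(\F_p)|$, which is bounded by the value $<2$ underlying \cite[Lemma 8.2]{EVW1}. Granted this bound, Lemma~\ref{L:la} yields $\tilde x_j = \tilde y_j$, hence $x_j = y_j$, for all $j$, which completes the proof.
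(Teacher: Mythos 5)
Your overall architecture --- reducing to the generating sets $\{G:Q_c(G)\isom \mathfrak{P}\}$, forming the system $\sum_j a_{i,j}\tilde{x}_j=\sum_j a_{i,j}\tilde{y}_j=1$ with $a_{i,j}=|\Sur_\sigma(\mathfrak{P}_j,\mathfrak{P}_i)|/|\Aut_\sigma(\mathfrak{P}_j)|$, and invoking Lemma~\ref{L:la} --- is exactly the paper's, and you correctly identify that everything hinges on $\sup_i\sum_j a_{i,j}<2$. But that is precisely the step you do not prove, and the route you sketch does not work. Decoupling the row sum into the crude bound $|\Sur_\sigma(\mathfrak{P}_j,\mathfrak{P}_i)|\leq |Y(\mathfrak{P}_i)|^{g}$ times a mass estimate $\sum_{d(\mathfrak{P}_j)=g}1/|\Aut_\sigma(\mathfrak{P}_j)|$ of size comparable to $1/|\GL_g(\F_p)|$ cannot give a bound uniform in $i$, because $|Y(\mathfrak{P}_i)|$ is controlled by the order of $\mathfrak{P}_i$, not by its rank: already for $\mathfrak{P}_i=\Z/p^k\Z$ with $k$ large, the decoupled bound at $g=1$ is of size roughly $p^{k}/(p-1)$, while the true rank-one contribution to the row sum is bounded independently of $k$. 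The needed smallness comes from the correlation between which rank-$g$ groups actually surject onto $\mathfrak{P}_i$ and the automorphism weights, which your two separate estimates throw away; moreover the mass estimate itself (your ``orbit-counting expression'') is asserted, not proved, for the full set of \shs groups of $p$-class $\leq c$, and enlarging the column set to all such groups (e.g.\ $(\Z/p\Z)^d$ for every $d$ when $c\geq 2$, whose $Q_c$-fibers are empty) only increases the row sums you must control.

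The paper closes this gap by different means, and those inputs are absent from your proposal. It indexes columns only by the groups that can occur as $Q_c(G)$ for a Schur $\sigma$-group $G$, namely \shs groups of $p$-class exactly $c$ together with Schur $\sigma$-groups of $p$-class $<c$; for these one has $h_c(S_j)\leq d(S_j)$ (using $r(G)\geq h$ from Boston--Nover). The explicit formula of Lemma~\ref{L:BBHformula} then gives the lower bound $\mu_{BBH}(\{G:Q_c(G)\isom S_j\})\geq \prod_{k\geq 1}(1-p^{-k})\,/\,|\Aut_\sigma(S_j)|$, hence $1/|\Aut_\sigma(S_j)|<1.9\,\mu_{BBH}(\{G:Q_c(G)\isom S_j\})$ for $p\geq 3$; summing against $|\Sur_\sigma(S_j,S_i)|$ and applying the already-established moment identity of Theorem~\ref{T:mom} yields $\sum_j a_{i,j}\leq 1.9<2$. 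In short, the row-sum bound is obtained by playing $\mu_{BBH}$ and its known moments off against the automorphism weights, not by a direct count of groups and surjections; without Lemma~\ref{L:BBHformula} (or an equivalent substitute) your argument stalls exactly at its decisive inequality.
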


Note that \shs groups are a proper subset of finite
$p$-groups with GI-automorphisms, so this theorem does not require all of the moments determined in Theorem~\ref{T:mom}.  

\begin{proof}
  By Carath\'eodory's theorem, a measure $\nu$ on $\Omega$ is determined by the measures $\nu(\{ G
  \ |\ Q_c(G)\isom S\})$ for all \shs groups $S$. 
If $G$ is a Schur  $\sigma$-group, then $Q_c(G)$ is either a \shs group of $p$-class $c$ or a 
  Schur  $\sigma$-group of $p$-class $<c$.  (This is because if $Q_c(G)$ is $p$-class $<c$ then $Q_c(G)=G$.)
  Let $\mathcal{S}$ be the set of isomorphism classes of groups that are either a \shs group of $p$-class $c$ or a 
  Schur  $\sigma$-group of $p$-class $<c$. 
  
For $H$  a \shs group of $p$-class $c$,  
    we have that
$$
\sum_{S \in \mathcal{S}} \nu(\{ G \ |\ Q_c(G)\isom S\}) |\Sur_\sigma(S,H)|=1
$$
and 
$$
\sum_{S \in \mathcal{S}} \mu_{BBH}(\{ G \ |\ Q_c(G)\isom S\}) | \Sur_\sigma(S,H)|=1.
$$We can index $\mathcal{S}$ by natural numbers $S_1,S_2,\dots.$ We then
apply Lemma~\ref{L:la} with
$$
a_{i,j}=\frac{|\Sur_\sigma(S_j,S_i)|}{|\Aut_\sigma (S_j)|}
$$
and $x_j=\nu(\{ G \ |\ Q_c(G)\isom S_j\})|\Aut_\sigma (S_j)|$ and
$y_j=\mu_{BBH}(\{ G \ |\ Q_c(G)\isom S_j\})|\Aut_\sigma (S_j)|$, which
will prove the proposition.  We must verify that $\sum_{j} a_{i,j}<2$.
 
 Using the explicit formulae for $\mu_{CL}(d)$ (from \cite{CL84}) and for $\mu_{BBH}$ (from Lemma~\ref{L:BBHformula}),
 we have that

 \begin{align*}
&\mu_{BBH}(\{ G \ |\ Q_c(G)\isom S_j\})= \frac{\mu_{CL}(d(S_j))p^{d(S_j)^2}
}{|\Aut_\sigma(S_j)|} \prod_{k=1}^{d(S_j)}(1-p^{-k})
\prod_{k=1+d(S_j)-h_c(S_j) }^{d(S_j)}(1-p^{-k})\\
&= \frac{\prod_{k\geq 1}(1-p^{-k})\prod_{k=1}^{d(S_j)}(1-p^{-k})^{-2}}{|\Aut_\sigma(S_j)|} \prod_{k=1}^{d(S_j)}(1-p^{-k})
\prod_{k=1+d(S_j)-h_c(S_j) }^{d(S_j)}(1-p^{-k})\\
&= \frac{1}{|\Aut_\sigma(S_j)|} \prod_{k\geq 1}(1-p^{-k})\prod_{k=1}^{d(S_j)}(1-p^{-k})^{-1}
\prod_{k=1+d(S_j)-h_c(S_j) }^{d(S_j)}(1-p^{-k}).
\end{align*}
When $S_j$ is $p$-class $c$, we have that $h_c(S_j)=h(S_j)$, and since
$S_j$ is a \shs, it is $Q_c(G)$ for some Schur $\sigma$-group $G$.  Since $r(G)=d(G)=d(S_j)$, and  
$r(G)\geq h(S_j)$ \cite[Proposition 2]{BostonNover}, we have $d(S_j)\geq h_c(S_j).$
When $S_j$ is a Schur $\sigma$-group, we have that $h_c(S_j)=r(S_j)=d(S_j)$.  In either case, we conclude that
\begin{align*}
\mu_{BBH}(\{ G \ |\ Q_c(G)\isom S_j\})\geq \frac{1}{|\Aut_\sigma(S_j)|} \prod_{k\geq 1}(1-p^{-k}).
\end{align*} 
 For all $p\geq 3,$ we have that $\prod_{k\geq 1}(1-p^{-k})>.53$ and so
$$
\frac{1}{|\Aut_\sigma(S_j)|}<1.9 \mu_{BBH}(\{ G \ |\ Q_c(G)\isom S_j\}) .
$$ 
 Thus,
\begin{align*}
\sup_i \sum_{j} a_{i,j} &= \sup_i \sum_{j}   \frac{|\Sur_\sigma(S_j,S_i)|}{|\Aut_\sigma (S_j)|}\\
&\leq 1.9 \sup_i  \sum_{j}  \mu_{BBH}(\{ G \ |\ Q_c(G)\isom S_j\})|\Sur_\sigma(S_j,S_i)|\leq 1.9.
\end{align*}
\end{proof}


\section{Moments as an extension counting problem}\label{S:Trans}

Let $Q$ be a global field with a choice of place ${\v}$.  (We are mainly interested in $Q=\Q$ or $\F_q(t)$ with the usual infinite place.)
We fix a separable closure $\bar{Q}_{\v}$ of the completion $Q_{\v}$.  Then, inside $\bar{Q}_{\v}$ we have the separable closure $\bar{Q}$ of $Q$.  This gives a map $\Gal(\bar{Q}_{\v}/Q_{\v})  \ra \Gal(\bar{Q}/Q)$, and in particular  distinguished decomposition and  inertia groups  in $\Gal(\bar{Q}/Q)$ at ${\v}$ (as opposed to just a conjugacy classes of subgroups).  

As in Section~\ref{S:ff}, when $K\sub \bar{Q}$ with $K/Q$ a separable, quadratic extension, we let $K^{\un,{\v}}\sub\bar{Q}$ be the maximal extension of $K$ that is unramified everywhere and split completely at ${\v}$.
  We let $G_K^{\un,{\v}}:=\Gal(K^{\un,{\v}}/K).$  We note that in $\Gal(K^{\un,{\v}}/Q)$
the the inertia group at $\v$  has order dividing $2$ by Lemma~\ref{L:whinertia}.
 Thus if $K$ is ramified at ${\v}$, we have a distinguished non-trivial inertia element $i_{K,{\v}}\in \Gal(K^{\un,{\v}}/Q)$. 
As noted earlier, an automorphism that has order dividing $2$ is called an \emph{involution}.
 Conjugation by $i_{K,{\v}}$ gives an involution of $G_K^{\un,{\v}}$, and we let this conjugation be our chosen automorphism $\sigma$ of $G_K^{\un,{\v}}$.  (Note this is a more specific choice than we made in Section~\ref{S:ff} under different hypotheses.)
 %

Recall, for any finite group $H$ with an involution $\sigma$, we write $\Sur_\sigma(G_K^{\un,{\v}},H)$ for the continuous surjections taking conjugation by $i_{K,{\v}}$ to $\sigma$.
We  let $\tH=H \rtimes_\sigma \Ct$, and we denote the generator of $\Ct$ by $\sigma$ (a convenient overloading of notation).  
 Let $c$ be the set of elements of $\tH\setminus H$ of order $2$.

We define (as in \cite[Section 10.2]{EVW2}) a \emph{marked $(\tH,c)$ extension} of $Q$ to be $(L,\pi,m)$
such that 
$L/Q$ is a Galois extension of fields, 
$\pi$ is an isomorphism $\pi: \Gal(L/Q)\isom \tH$ such that all inertia groups in $\Gal(L/Q)$ (except for possibly the one at ${\v}$) have image in $\{1\}\cup c$, and $m$, the \emph{marking}, is a homomorphism $L_{\v}:=L\tensor_Q Q_{\v} \ra \bar{Q}_{\v}$.  Note that restriction to $L$ gives a bijection between homomorphisms 
$L_{\v} \ra \bar{Q}_{\v}$ and homomorphisms $L\ra \bar{Q}$.  Also, note that the condition that an inertia group in $\Gal(L/Q)$ has image in $\{1\}\cup c$ is equivalent to requiring that it has trivial intersection with $\pi^{-1}(H)$ because any element in $\tH\setminus (\{1\}\cup c)$ is either in $H$ or has square non-trivial in $H$. 
Two marked $(\tH,c)$ extensions $(L_1,\pi_1,m_1)$ and $(L_2,\pi_2,m_2)$ are isomorphic when there is an isomorphism $L_1\ra L_2$ taking $\pi_1$ to $\pi_2$ and $m_1$ to $m_2$.  
The marking $m$ in a marked $(\tH,c)$ extension $(L,\pi,m)$ gives a map $\Gal(\bar{Q}_{\v}/Q_{\v}) \ra \Gal(L/Q)$.
 Composing with $\pi$ we get an \emph{infinity type} $\Gal(\bar{Q}_{\v}/Q_{\v}) \ra \tH$. Such a homomorphism is called ramified if the image of inertia is nontrivial.
 
Note that in each isomorphism class of  marked $(\tH,c)$ extensions of $Q$, there is a distinguished element such that $L\sub \bar{Q}$ and $m|_{L}$ is the inclusion map.  

\begin{theorem}\label{T:trans}
Let $Q$ be a global field with a choice of place $\v$.
Let $H$ be a finite group with involution $\sigma$, let $\tH:=H \rtimes_\sigma \Ct$, and let
$c$ be the set of order $2$ elements of  $\tH \setminus H$.
%
Let $\phi: \Gal(\bar{Q}_\v/Q_\v) \ra \tH$ be a ramified homomorphism 
with image $\langle (1,\sigma) \rangle$.
There is a bijection between
$$
\{(K,f) | K\sub \bar{Q}, [K:Q]=2, K_{\v}/Q_{\v} \textrm{ the quadratic extension given by $\ker(\phi)$, } f\in \Sur_\sigma(G_K^{\un,\v},H) \}
$$ 
and
$$
\{\textrm{isomorphism classes of marked $(\tH,c)$ extensions $(L,\pi,m)$ of $Q$ with infinity type $\phi$}  \}.
$$

In this bijection, we have $\Disc(L)=\Disc(K)^{|H|}$.
\end{theorem}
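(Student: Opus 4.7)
The plan is to construct mutually inverse maps between the two sets using the canonical lift of $f$ to all of $\Gal(K^{\un,\v}/Q)$, and to read the discriminant formula off from the fact that $L/K$ will turn out to be unramified everywhere.

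\emph{Forward map.} Given $(K,f)$, Lemma~\ref{L:whinertia} gives an involution $i_{K,\v}\notin G_K^{\un,\v}$, producing a semidirect decomposition $\Gal(K^{\un,\v}/Q)=G_K^{\un,\v}\rtimes \langle i_{K,\v}\rangle$. Because conjugation by $i_{K,\v}$ is the chosen $\sigma$ and $f$ is $\sigma$-equivariant, $f$ extends uniquely to a surjection $\tilde f\colon \Gal(K^{\un,\v}/Q)\twoheadrightarrow \tH$ with $\tilde f(i_{K,\v})=(1,\sigma)$. Let $L\sub \bar Q$ be the fixed field of $\ker(\tilde f)$, $\pi\colon \Gal(L/Q)\isom \tH$ the isomorphism induced by $\tilde f$, and $m\colon L_\v\to\bar Q_\v$ the map coming from the inclusion $L\sub \bar Q$. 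To see $(L,\pi,m)$ is a marked $(\tH,c)$ extension with infinity type $\phi$, note that Lemma~\ref{L:whinertia} forces every nontrivial inertia element of $\Gal(K^{\un,\v}/Q)$ to be an involution outside $G_K^{\un,\v}$, so $\tilde f$ sends it into $c$; and since $K^{\un,\v}/K$ is split at $\v$, decomposition and inertia at $\v$ in $\Gal(K^{\un,\v}/Q)$ both equal $\langle i_{K,\v}\rangle$, so the infinity-type map factors as $\Gal(\bar Q_\v/Q_\v)\twoheadrightarrow\langle i_{K,\v}\rangle\xrightarrow{\tilde f}\tH$, sending the inertia generator to $(1,\sigma)$ and hence agreeing with $\phi$ by hypothesis.

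\emph{Inverse map.} Given $(L,\pi,m)$, restriction of $m$ to $L$ picks out the distinguished embedding $L\sub \bar Q$; let $K\sub L$ be the fixed field of $\pi^{-1}(H)$, so $K\sub\bar Q$ is quadratic over $Q$. Because inertia at any place not over $\v$ is contained in $\{1\}\cup c\sub \{1\}\cup(\tH\setminus H)$ and the decomposition group at $\v$ has image $\im(\phi)=\langle(1,\sigma)\rangle$, each intersects $\pi^{-1}(H)=\Gal(L/K)$ trivially; thus $L/K$ is unramified everywhere and split completely at $\v$, yielding $L\sub K^{\un,\v}$. Restricting $\pi$ then produces $f\colon G_K^{\un,\v}\twoheadrightarrow H$, and matching infinity types forces $\pi(i_{K,\v})=(1,\sigma)$, so $f$ is $\sigma$-equivariant. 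The completion $K_\v/Q_\v$ is the fixed field of $\ker(\phi)$ inside $\bar Q_\v$ since the image of $\phi$ corresponds under $\pi$ to the decomposition group at $\v$ in $\Gal(L/Q)$, whose intersection with $\Gal(L/K)$ is trivial. The two constructions are mutual inverses by inspection, and because $L/K$ is everywhere unramified the tower-of-discriminants formula gives $\Disc(L)=\Nm_{K/Q}(\Disc(L/K))\cdot \Disc(K)^{[L:K]} = \Disc(K)^{|H|}$.

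The main obstacle is the bookkeeping for the marking and the infinity type: one must show that the forward map produces \emph{exactly} $\phi$ (not a $\tH$-conjugate), and that the inverse map recovers the specific $K\sub \bar Q$ whose completion is $\ker(\phi)$ (not a Galois conjugate). These points rely on working with the distinguished representative $L\sub \bar Q$ with $m$ the inclusion within each isomorphism class of marked extensions, together with the identification of inertia at $\v$ with the full decomposition group in $\Gal(K^{\un,\v}/Q)$, which uses the split-completely condition in the definition of $K^{\un,\v}$. With these in place every choice is canonical and the correspondence is a literal bijection rather than one only up to conjugation.
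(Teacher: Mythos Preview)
Your proof is correct and follows essentially the same route as the paper: extend $f$ across the semidirect decomposition $\Gal(K^{\un,\v}/Q)=G_K^{\un,\v}\rtimes\langle i_{K,\v}\rangle$ to obtain $(L,\pi,m)$, and conversely recover $K$ as the fixed field of $\pi^{-1}(H)$ and $f$ by restriction, using Lemma~\ref{L:whinertia} to control inertia and the split-completely condition to pin down the decomposition group at~$\v$. You even supply the discriminant computation via the conductor--discriminant tower formula, which the paper asserts but does not spell out.
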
  



\begin{proof}
Given a $(K,f)$, we have that $\ker(f)$ gives a subfield of $L\sub K^{\un,v}\sub \bar{Q}$ and we have $f: \Gal(L/K) \isom H$.  We see that $\Gal(L/K)$ is an index $2$ subgroup of $\Gal(L/Q)$, and $i_{K,{\v}}$ is an order $2$ element of $\Gal(L/Q)\setminus \Gal(L/K).$  From the condition on the surjection $f$, we have that that $f$ takes the conjugation action of $i_{K,{\v}}$ on $\Gal(L/K)$ to the involution $\sigma$ on $H$.  Thus we can lift $f$ to
$\pi: \Gal(L/Q) \isom \tH$ with $i_{K,{\v}}\mapsto (1,\sigma)$.  We let the marking $m$ be the map $L_{\v}\ra \bar{Q}_{\v}$ induced by the identity on $L\sub \bar{Q}\sub\bar{Q}_{\v}.$  Since $L\sub K^{\un,\v}$, all inertia subgroups of $\Gal(L/Q)$ have image under $\pi$ in $\{1\}\cup c$.  
The infinity type $\Gal(\bar{Q}_{\v}/Q_{\v}) \ra \tH$ factors through the map $\pi$.
Since the index $2$ subgroup $\Gal(\bar{Q}_{\v}/K_{\v})$ has trivial image (it factors through $\Gal(L/K)$, and $L/K$ is split completely at $\v$), the infinity type of $m$ factors through the order $2$ group $\Gal(K_{\v}/Q_{\v})$.
Since, by construction of $\pi$, the inertia group $\Gal(\bar{Q}_{\v}/Q_{\v})$ has image $\langle (1,\sigma) \rangle,$ it follows that the infinity type is $\Gal(\bar{Q}_{\v}/Q_{\v})\ra \Gal(K_{\v}/Q_{\v}) \isom \langle (1,\sigma) \rangle$, which is $\phi$.

Given an isomorphism classes of marked $(\tH,c)$ extensions $(L,\pi,m)$ of $Q$ with infinity type $\phi$, we take the representative for which  $L\sub \bar{Q}$ and $m|_{L}$ is the identity map.  Then we let $K\sub \bar{Q}$ be the fixed field of $\pi^{-1}(H)$.  From the infinity type, we see that $L/K$ is split completely at $\v$, and that $K/Q$ is ramified at $\v$ such that $K_\v$ corresponds to $\ker(\phi)$.
By the fact that  $(L,\pi,m)$ is a $(\tH,c)$ extension of infinity type $\phi$, it follows that $L\sub K^{\un,\v}$, so we get a surjection $f: G_K^{\un,\v} \ra \Gal(L/K) \stackrel{\pi}{\ra} H$. 
From the infinity type, we see that $\pi$ takes  $i_{K,{\v}}\mapsto (1,\sigma)$, so we get that 
$f\in \Sur_\sigma(G_K^{\un,\v},H)$.

If we start with $(K,f)$, then by construction the fixed field of the $\pi^{-1}(H)$  from our constructed $(L,\pi)$ is $K$, and the restriction of $\pi$ to $\Gal(L/K)$ is $f$.  So if we apply both these constructions we return to  the same $(K,f)$. On the other hand, if we start with $(L,\pi,m)$
(such that $m$ is the identity), $L$ is the fixed field of the constructed morphism $f$, and $\pi$ is determined by the constructed $f$ and the image of $i_{K,{\v}}$, and so if we apply both these constructions we return to $(L,\pi,m)$.
\end{proof}

\section{Applying methods of Ellenberg-Venkatesh-Westerland to the extension counting problem}\label{S:EVW}

Theorem~\ref{T:FFqlimit} will follow  from Corollary~\ref{C:MainFF} in this section.
We will prove this result using a method and many results due to Ellenberg, Venkatesh, and Westerland in papers \cite{EVW1,EVW2}.  The method counts extensions of function fields by considering this as a problem of counting $\F_q$ points on a moduli space of curves with maps to $\P^1$, applying Grothendieck-Lefschetz to counts these points, and using results from topology to bound the dimensions of the cohomology groups.

\subsection{Group theory computation}
In this section, we will prove a lemma in group theory that will be central to proving Theorem~\ref{T:FFqlimit}.  This lemma will count $\F_q$-rational components in a moduli space on which we will eventually count points.  

First we will define the \emph{universal marked central extension} $\tilde{G}$ of a finite group $G$ for a union $c$ of conjugacy classes of $G$, following
\cite[Section 7]{EVW2}.
Let $\SC$ be a Schur cover of $G$ so we have an exact sequence
$$
1\ra H_2(G,\Z)\ra \SC \ra G \ra 1
$$
by the Schur covering map.
For $x,y\in G$ that commute, let $\hat{x}$ and $\hat{y}$ be arbitrary lifts to $\SC$, and let $\langle x,y\rangle$
be the commutator $[\hat{x},\hat{y}]\in \SC$, which actually lies in 
$ H_2(G,\Z)$ since $x$ and $y$ commute.  
  It we take the quotient of the above exact sequence by all $\langle x,y\rangle$ for $x\in c$ and $y$ commuting with $x$, we obtain an exact sequence 
$$
1\ra H_2(G,c)\ra \tilde{G}_c \ra G \ra 1,
$$
which is still a central extension. 
Let $G^{ab}$ denote the abelianization of $H$.
 The universal marked central extension is $\tilde{G}=\tilde{G}_c\times_{G^{ab}} \Z^{c/G}$, where $c/G$ denotes the set
of conjugacy classes in $c$ and the map 
 $\Z^{c/G} \ra G^{ab}$ sends
each standard generator to an element of the associated conjugacy class.  We have a map $\tilde{G} \ra G$, given through projecting to the first factor.  (See \cite[Section 7]{EVW2} for why this is called a universal marked central extension.)

\begin{lemma}\label{L:B}
Let $H$ be an odd finite group with a GI-automorphism $\sigma$, and $\tH=H\rtimes_\sigma \Ct$.
Let $c$ be the (single) conjugacy class of order $2$ elements.  Let $q$ be a power of a prime and $n$ be an odd integer. 
If $(q,2|H|)=1$ and $(q-1,|H|)=1$, then for each $y\in c$, there is exactly $1$ element 
$x\in \tilde{G}_c$ such that $(x,n)\in \tilde{G}$, and $x$ has image $y$ in $G$, and $x^q=x$.
\end{lemma}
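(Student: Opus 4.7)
The plan is to reduce the claim to solving $z^{q-1}=w$ for a unique $z$ in the finite abelian group $H_2(G,c)$, and then to verify that the map $z\mapsto z^{q-1}$ is a bijection by bounding the primes dividing $|H_2(G,c)|$.

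First I would dispense with the fiber product condition. Because $|H|$ is odd and the GI-automorphism $\sigma$ inverts a generating set of $H^{ab}$, in $G^{ab}=(H\rtimes_\sigma\Ct)^{ab}$ every $h\in H$ satisfies $h^2=1$, forcing $h=1$. Hence $G^{ab}\isom\Ct$. Since $c$ is a single conjugacy class, $\Z^{c/G}\isom\Z$, and the map $\Z\to G^{ab}=\Ct$ is reduction mod $2$; as $n$ is odd and $y\in G\setminus H$ maps to the nontrivial class, the condition $(x,n)\in\tilde{G}$ is automatic once $x\in\tilde{G}_c$ lifts $y$.

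Next I would parameterize preimages. Fix any lift $x_0\in\tilde{G}_c$ of $y$; every lift then has the form $x_0z$ with $z\in H_2(G,c)$, which sits centrally in $\tilde{G}_c$. Because $y^2=1$ and $q$ is odd, $x_0^{q-1}=(x_0^2)^{(q-1)/2}$ lies in $H_2(G,c)$, and the relation $(x_0z)^q=x_0z$ rearranges to $z^{q-1}=x_0^{-(q-1)}$ in $H_2(G,c)$. So the claim reduces to showing that $z\mapsto z^{q-1}$ is a bijection on $H_2(G,c)$, i.e.\ that $(q-1,|H_2(G,c)|)=1$.

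The main step is bounding $|H_2(G,c)|$, which I would carry out via the Lyndon--Hochschild--Serre spectral sequence for $1\to H\to G\to\Ct\to 1$. Because $|H|$ is odd, each $H_q(H,\Z)$ with $q\geq 1$ has odd exponent, and coprimality of the orders forces $H_p(\Ct,H_q(H,\Z))=0$ for $p,q\geq 1$; in total degree $2$ the remaining contributions are $E^2_{2,0}=H_2(\Ct,\Z)=0$ and $E^2_{0,2}=H_2(H,\Z)_{\Ct}$, which has odd order. Thus $|H_2(G,\Z)|$ is odd, and since its exponent divides $|G|=2|H|$, it in fact divides $|H|$. These properties pass to the quotient $H_2(G,c)$, and then the hypothesis $(q-1,|H|)=1$ yields $(q-1,|H_2(G,c)|)=1$, producing the unique $z$. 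The hard part is precisely this parity statement: since $q$ is odd, $q-1$ is automatically even, so ruling out $2$-torsion in $H_2(G,c)$ is the step the spectral sequence analysis is needed for; everything else—computing $G^{ab}$, unwinding the fiber product, and manipulating $(x_0z)^q$—is formal.
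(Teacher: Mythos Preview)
Your proof is correct and follows essentially the same route as the paper's: bound the primes in $|H_2(G,c)|$, then solve a single equation in the central kernel. The only differences are cosmetic. Where the paper cites \cite[Example 9.3.2]{EVW2} for the fact that $H_2(G,\Z)$ is a quotient of $H_2(H,\Z)$ (hence odd, with prime divisors among those of $|H|$), you supply a self-contained Lyndon--Hochschild--Serre argument reaching the same conclusion. And where the paper first simplifies $x^q=x$ to $x^2=1$ (using $\gcd(|\tilde G_c|,q-1)=2$) and then solves $k^2=w^{-2}$ via bijectivity of squaring on the odd group $H_2(G,c)$, you go straight to $z^{q-1}=x_0^{-(q-1)}$ and invoke bijectivity of the $(q-1)$st power map; these are equivalent manipulations using the same coprimality. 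Your treatment of the fiber product condition via $G^{ab}\cong C_2$ is also exactly the paper's.
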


\begin{proof}

We have that $|\tilde{\tH}_c|=2|H|||H_2(\tH,c)|$ and that $H_2(\tH,\Z)$ is a quotient of $H_2(H,\Z)$ by \cite[Example 9.3.2]{EVW2}.  Thus since $|H|$ is relatively prime to $2(q-1)$, we have that $|H_2(\tH,\Z)|$ is as well and thus $|H_2(\tH,c)|$ is as well.
 Since $|\tilde{\tH}_c|/2$ is relatively prime to $q-1$, we have that for $x\in \tilde{\tH}_c$, $x^q=x$ if and only if 
$x^2=1$. 

Let $w\in \tilde{\tH}_c$ be in the inverse image of $y$.  Then we ask for which $k\in H_2(\tH,c)$ is $wk$ of order $2$.  Since 
$H_2(G,c)\ra \tilde{\tH}_c$ is central, we have $(wk)^2=w^2k^2$, and note $w^2 \in H_2(\tH,c)$ since $y^2=1$.
Since $H_2(\tH,c)$ is an odd abelian group, there is exactly one $k\in H_2(\tH,c)$ such that $w^2k^2=1$.  Let $x=wk$ for this $k$, which is the only possible $x$ satisfying the conditions of  the lemma.   Also, note that $(x,n)\in \tilde{\tH}$ since $x$ and $n$ have image of the class of $y$ in $\tH^{ab}$, proving the lemma. 

%
%
 
 %
%
    
\end{proof}

\subsection{Properties of the Hurwitz scheme constructed by Ellenberg, Venkatesh, and Westerland}

In this theorem, we recall the Hurwitz scheme constructed by Ellenberg, Venkatesh, and Westerland to study extensions of $\F_q(t)$ and its properties.

\begin{theorem}[Ellenberg, Venkatesh, and Westerland]\label{T:Chur}
Let $H$ be an odd finite group with GI-automorphism $\sigma$, and let $\GG:=H \rtimes_\sigma \Ct$. 
 Let $c$ be the elements of $\GG$ of order $2$.  
 Let $\F_q$ be a finite field with $q$ relatively prime to $|\GG|$.
 When $\GG$ is center-free, there is a Hurwitz scheme $\CCHur_{G,n}$ over $\Z[|\GG|^{-1}]$ constructed in \cite[Section 8.6.2]{EVW2}\footnote{ The paper \cite{EVW2} has been temporarily withdrawn by the authors
 because of a gap which affects Sections 6, 12 and some theorems of the introduction of \cite{EVW2}.  That gap does not affect any of the results from \cite{EVW2} that we use in this paper.
 }  with the following properties:
 \begin{enumerate}

\item We have $\CCHur_{G,n}$ is a finite \'etale cover of the relatively smooth $n$-dimensional configuration space $\Conf^n$ of $n$ distinct unlabeled points in $\A^1$ over $\Spec \Z[|\GG|^{-1}]$.
\label{i:smooth} 
 
 \item  \label{i:bij} 
 The scheme $\CCHur_{G,n}$ has an open and closed subscheme $\CCHur^{c,c}_{G,n}$ such that there is a bijection between
 \begin{enumerate}
 \item isomorphism classes of  marked $(\GG,c)$-extensions $L$ of $\F_q(t)$ of 
 $\Nm \Disc (L)=q^{(n+1)|H|}$
 and an infinity type $\phi$ such that $\phi(F_\Delta)=1$ and $\im \phi$ is of order $2$ and in $c \cup \{1\}$ (where $F_\Delta$ is a lift of Frobenius to $\Gal(\bar{Q}_{\v}/Q_{\v})$ that acts trivially on $\F_q((t^{-1/\infty}))$).
%
 \item points of $\CCHur^{c,c}_{G,n}(\F_q)$ \cite[Section 10.4]{EVW2}.
 \end{enumerate}

\item We have $\CCHur_{G,n}(\C)$ is homotopy equivalent to a topological space $\CHur_{G,n}$ \cite[Section 8.6.2]{EVW2}, 
such that 
for any field $k$ of characteristic relatively prime to $|\GG|$,
there is a constant $C$ such that for all $i\geq 1$ and for all $n$ we have $\dim H^i(\CHur_{G,n},k)\leq C^i$ \cite[Proposition 2.5 and Theorem 6.1]{EVW1}. \label{i:topbounds}

%
%

\item Given $G$, for $n$ sufficiently large and all $q$ with $(q,G)=1$, the $\Frob$ fixed components of $\CCHur^{c,c}_{G,n}  \tensor_{\Z[|\GG|^{-1}]} {\bar{\F}_q}$ are in bijection with elements
$(x,n) \in \tilde{G} $ such that $x^q=x$ and $x$ has image of order $2$ in $G$  \cite[Theorem 8.7.3]{EVW2}. (The requirement that $x$ has image of order $2$ in $G$ ensures the monodromy at $\v$ is in $c$.)
\label{i:comp}

 \end{enumerate}
\end{theorem}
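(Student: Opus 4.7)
The plan is that this theorem is essentially a compilation of results already in \cite{EVW1, EVW2}, so the proof reduces to verifying that our hypotheses match those of the cited sources and to translating conventions between the language of extensions of $\F_q(t)$ used in this paper and the language of moduli of covers used in EVW. I would prove the four items in order, noting that items (1) and (3) are direct invocations while items (2) and (4) require some unpacking.

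For item (1), the Hurwitz scheme $\CCHur_{G,n}$ is constructed in \cite[Section 8.6.2]{EVW2} as the moduli space of \'etale $G$-covers of $\A^1$ whose branch locus has degree $n$ and whose local monodromy lies in $c$; the map to $\Conf^n$ records the branch divisor. The center-free hypothesis on $G$ is precisely what makes the moduli problem representable and the map to $\Conf^n$ finite \'etale once $(q,|G|)=1$, by standard tame ramification theory. For item (3), the bound $\dim H^i(\CHur_{G,n},k)\le C^i$ is a direct application of the homological stability machinery of \cite[Proposition~2.5, Theorem~6.1]{EVW1}, since $G$ and $c$ satisfy the non-splitting hypotheses needed there. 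For item (2), I would combine \cite[Section~10.4]{EVW2}, which describes $\F_q$-points of $\CCHur_{G,n}^{c,c}$ as $G$-covers of $\A^1_{\F_q}$ with rational branch divisor and inertia in $c$ at every branch point including $\infty$, with our Theorem~\ref{T:trans} applied to $Q=\F_q(t)$ and $\v=\infty$. The discriminant identity $\Nm\Disc(L)=q^{(n+1)|H|}$ is a conductor-discriminant calculation: there are $n+1$ branch points (the $n$ finite ones and $\infty$), each tamely ramified with inertia of order $2$, and each contributes $|H|$ to $\deg\Disc$ since the degree-$2|H|$ cover has $|H|$ places above each branch point with different exponent $1$.

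Item (4) is the subtlest piece and I expect it to be the only real obstacle. The parameterization of $\Frob$-fixed components of $\CCHur_{G,n}^{c,c}\otimes \bar{\F}_q$ is precisely the statement of \cite[Theorem~8.7.3]{EVW2}, whose proof combines the component theorem of \cite{EVW2}, which identifies $\pi_0$ of the topological Hurwitz space with pairs $(x,n)\in\tilde{G}$ under a stability hypothesis on $n$, together with an explicit computation of the Galois action on this $\pi_0$ showing that a component is $\Frob$-fixed iff $x^q=x$. The main technical point to verify is that the hypotheses of \cite[Theorem~8.7.3]{EVW2} apply in our setting: the tameness condition $(q,|G|)=1$, the center-freeness of $G$ (which follows from our hypothesis that $Z(H)$ has no nontrivial $\sigma$-fixed elements, once one checks that $Z(G) = Z(H)^{\sigma}\cap H$ trivially), and the condition that $x$ has image of order $2$ in $G$ (which encodes that the monodromy at $\infty$ lies in $c$). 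Once these are matched, the statement follows without further work.
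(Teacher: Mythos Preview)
Your overall assessment is right: in the paper this theorem has no proof environment at all; it is a statement whose items carry inline citations to \cite{EVW1,EVW2}, followed by a Remark explaining two translation points (why $\CCHur^{c,c}_{G,n}$ is open and closed, and how to pass from the $\hat{\Z}^\times$-equivariant description of components in \cite[Theorem~8.7.3]{EVW2} to the parametrization by $(x,n)\in\tilde{G}$ with $x^q=x$). So your plan to treat items (1) and (3) as direct invocations and items (2) and (4) as translation exercises matches the paper exactly.

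There is one genuine confusion in your treatment of item~(2). You propose to combine \cite[Section~10.4]{EVW2} with Theorem~\ref{T:trans}, but Theorem~\ref{T:trans} is a bijection between pairs $(K,f)$ and marked $(G,c)$-extensions, whereas item~(2) is a bijection between marked $(G,c)$-extensions and $\F_q$-points of $\CCHur^{c,c}_{G,n}$. These are different bijections; item~(2) comes straight from \cite[Section~10.4]{EVW2} with no input from Theorem~\ref{T:trans}. The paper only composes the two bijections later, in the proof of Corollary~\ref{C:MainFF}. Your discriminant computation is fine and is a helpful elaboration the paper leaves implicit, but drop the appeal to Theorem~\ref{T:trans} here.

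For item~(4), your sketch is correct in spirit. Note that center-freeness of $G$ is a hypothesis of Theorem~\ref{T:Chur} itself, so nothing needs to be derived; the equivalence with $Z(H)^\sigma=\{1\}$ is used only later, after Corollary~\ref{C:MainFF}. The substantive content the paper adds (in the Remark) is the explicit dictionary: choosing a topological generator of $\varprojlim\mu_n$ converts the $\hat{\Z}^\times$-equivariant functions of \cite[Theorem~8.7.3]{EVW2} into elements of $\tilde{G}$, and \cite[(9.4.1) and 9.3.2]{EVW2} identify the inverse Frobenius with $(x,n)\mapsto(x^q,n)$.
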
 

\begin{remark}
The scheme $\CCHur^{c,c}_{G,n}\sub \CCHur_{G,n}$ comes from restricting to the parametrization of covers of $\P^1$  all of whose local inertia groups have image 
 in $c \cup \{1\}$.  We use two $c$ superscripts because \cite{EVW2} uses a single $c$ superscript to denote when this restriction is made only over points in $\A^1\sub \P^1$.  The argument that $\CCHur^{c,c}_{G,n}\sub \CCHur_{G,n}$ is an open and closed subscheme is as in \cite[Section 7.3]{EVW1}. 
Our description of the components requires a bit of translation from that in \cite[Theorem 8.7.3]{EVW2}.  They biject the components with $\hat{\Z}^\times$ equivariant functions from topological generators of 
$\varprojlim \mu_n$ (taken over $n$ relatively prime to $q$) to the preimage of $c$ in $\tilde{G}$ that are fixed by the discrete action of $\Frob$.  By choosing any topological generator of $\varprojlim \mu_n$, its image under a function to $\tilde{G}$ gives us a corresponding element of $\tilde{G}$.   Using the definition of the discrete action and \cite[Equation (9.4.1) and 9.3.2]{EVW2}, we can see that under this correspondence $(x,n)\mapsto (x^q,n)$ describes the inverse of $\Frob$.
\end{remark}
 
 \subsection{Counting $\F_q$ points}
In this section, we will count the $\F_q$ points of $\CCHur^{c,c}$ in Theorem~\ref{T:count}, and then use our Theorem~\ref{T:trans} to translate that into a result about surjections from Galois groups $G_K$ in Corollary~\ref{C:MainFF}, which will finally prove Theorem~\ref{T:FFqlimit}.

\begin{theorem}\label{T:count}
Given $G$ and $c$ as in Theorem~\ref{T:Chur},
 we have a constant $C$ and a constant $n_G$ such that for $q>C^2$, with $(q,|G|)=1$ and $(q-1,|G|/2)=1$, and odd $n\geq n_G$,
$$
|\#\CCHur^{c,c}_{G,n}({\F}_q) - q^{n} \cdot \#c|\leq  \frac{q^{n}}{\sqrt{q}/C-1}.
$$
\end{theorem}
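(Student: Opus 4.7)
The plan is to apply the Grothendieck--Lefschetz trace formula to the smooth variety $\CCHur^{c,c}_{G,n}$, which has relative dimension $n$ over $\Spec \Z[|G|^{-1}]$ by Theorem~\ref{T:Chur}(\ref{i:smooth}). For any prime $\ell$ coprime to $q|G|$, this yields
$$
\#\CCHur^{c,c}_{G,n}(\F_q) = \sum_{i=0}^{2n} (-1)^i \tr\bigl(\Frob \mid H^i_c(\CCHur^{c,c}_{G,n, \bar{\F}_q}, \Q_\ell)\bigr).
$$
The strategy is to isolate the $i = 2n$ piece as the main term $q^n \cdot \#c$, and bound the remaining contributions using the dimension bound from Theorem~\ref{T:Chur}(\ref{i:topbounds}) together with Deligne's Weil~II estimates.

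For the main term, smoothness of relative dimension $n$ implies that $H^{2n}_c(\CCHur^{c,c}_{G,n,\bar{\F}_q}, \Q_\ell)$ has dimension equal to the number of geometric irreducible components, with Frobenius acting by multiplication by $q^n$ on each component it fixes and permuting the rest. Hence the $i = 2n$ term equals $q^n$ times the number of $\Frob$-fixed geometric components. By Theorem~\ref{T:Chur}(\ref{i:comp}), valid for $n \geq n_G$, this count equals $\#\{(x,n) \in \tilde G : x^q = x,\ x \text{ has image of order } 2 \text{ in } G\}$. Under the hypotheses $(q,|G|)=1$ and $(q-1,|G|/2) = (q-1,|H|) = 1$, Lemma~\ref{L:B} provides exactly one qualifying $x$ for each $y \in c$, so the count is $\#c$, giving the main term $q^n \cdot \#c$.

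For the error, I would invoke Poincar\'e duality to identify $\dim H^i_c(\CCHur^{c,c}_{G,n,\bar{\F}_q}, \Q_\ell) = \dim H^{2n-i}(\CCHur^{c,c}_{G,n,\bar{\F}_q}, \Q_\ell)$, and transfer to the topological setting via the Artin comparison theorem applied to the smooth $\Z[|G|^{-1}]$-scheme $\CCHur^{c,c}_{G,n}$; since $\CCHur^{c,c}_{G,n}(\C)$ is a union of connected components of $\CCHur_{G,n}(\C)$, which is homotopy equivalent to $\CHur_{G,n}$, Theorem~\ref{T:Chur}(\ref{i:topbounds}) furnishes a constant $C$ with $\dim H^{j} \leq C^{j}$ for every $j \geq 1$. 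Deligne's Weil~II estimate gives $|\tr(\Frob \mid H^i_c)| \leq (\dim H^i_c) \cdot q^{i/2}$, so substituting $j = 2n - i$ bounds the non-top contribution by
$$
\sum_{i=0}^{2n-1} \dim H^i_c \cdot q^{i/2} \;\leq\; \sum_{j=1}^{2n} C^j\, q^{n - j/2} \;=\; q^n \sum_{j=1}^{2n} \bigl(C/\sqrt{q}\bigr)^j \;\leq\; \frac{q^n}{\sqrt{q}/C - 1},
$$
valid for $q > C^2$ by geometric series. Combined with the main term, this yields the stated bound.

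The most delicate step is ensuring the topological dimension bound of Theorem~\ref{T:Chur}(\ref{i:topbounds}) transfers cleanly to $\ell$-adic \'etale cohomology in characteristic $p$; this goes through comparison and smooth-base-change arguments systematically exploited in \cite{EVW1,EVW2}, together with the observation that $\CCHur^{c,c}_{G,n}$ is a union of components of $\CCHur_{G,n}$. The other ingredients---Grothendieck--Lefschetz, Poincar\'e duality, and Deligne's purity---are standard.
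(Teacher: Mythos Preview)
Your proposal is correct and follows essentially the same approach as the paper: Grothendieck--Lefschetz on the smooth scheme, the top-degree term identified via Theorem~\ref{T:Chur}(\ref{i:comp}) and Lemma~\ref{L:B}, Poincar\'e duality plus comparison to bring in the topological bound from Theorem~\ref{T:Chur}(\ref{i:topbounds}), and Deligne's eigenvalue estimates summed geometrically. The only place the paper is more explicit is the characteristic-$0$/characteristic-$p$ comparison, where it invokes \cite[Proposition~7.7]{EVW1} on the pullback $\CCHur^{c,c}_{G,n}\times_{\Conf^n}\operatorname{PConf}_n$ (a cover of the complement of a relative normal crossings divisor) and then takes $S_n$-invariants---exactly the ``smooth-base-change argument from \cite{EVW1,EVW2}'' you allude to.
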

\begin{proof}
Our theorem will follow by applying the Grothendieck-Lefschetz trace formula to $X:= \CCHur^{c,c}_{G,n} \tensor_{\Z[|\GG|^{-1}]} {{\F}_q}$.
 By Theorem~\ref{T:Chur} \eqref{i:smooth}, we have that $X$ is smooth of dimension $n$.
 We have that $\dim H^i_{\textrm{c,\'et}}(X_{\bar{\F}_q},\Q_\ell)=
 \dim H^{2n-i}_{\textrm{\'et}}(X_{\bar{\F}_q},\Q_\ell)$ by Poincar\'{e} Duality.
 
Next, we will relate $\dim H^{j}_{\textrm{\'et}}(X_{\bar{\F}_q},\Q_\ell)$ to 
 $\dim H^{j}(\CCHur^{c,c}_{G,n}(\C),\Q_\ell)$ for some $\ell>n$.  To compare \'{e}tale cohomology between characteristic $0$ and positive characteristic, we will use \cite[Proposition 7.7]{EVW1}.  The result \cite[Proposition 7.7]{EVW1} gives an isomorphism between \'{e}tale cohomology between characteristic $0$ and positive characteristic in the case of a finite cover of a complement of a reduced normal crossing divisor in a smooth proper scheme.  Though \cite[Proposition 7.7]{EVW1} is only stated	 for \'{e}tale cohomology with coefficients in $\Z/\ell\Z$, the argument goes through identically for coefficients in $\Z/\ell^k\Z$, and then we can take the indirect limit and tensor with $\Q_{\ell}$ to obtain the result of \cite[Proposition 7.7]{EVW1} with $\Z/\ell\Z$ coefficients replaced by $\Q_\ell$ coefficients.  So we apply this strengthened version to conclude that 
$ \dim H^{j}_{\textrm{\'et}}(X_{\bar{\F}_q},\Q_\ell)=\dim H^{j}_{\textrm{\'et}}((\CCHur^{c,c}_{G,n})_{\C}),\Q_\ell)$.
(As in \cite[Proof of Proposition 7.8]{EVW1}, we apply comparison to $\CCHur^{c,c}_{G,n} \times_{\Conf^n } \operatorname{PConf}_n,$ where $\operatorname{PConf}_n$ is the moduli space of $n$ labelled points on $\A^1$, and is the complement of a relative normal crossings divisor in a smooth proper scheme \cite[Lemma 7.6]{EVW1}.
Then we take $S_n$ invariants to compare the \'etale cohomology of $\CCHur^{c,c}_{G,n}$ across characteristics.)
By the comparison of \'{e}tale and analytic cohomology \cite[Expos\'e XI, Theorem 4.4]{SGA4:3} $\dim H^{j}(\CCHur^{c,c}_{G,n}(\C),\Q_\ell)=\dim H^{j}_{\textrm{\'et}}((\CCHur^{c,c}_{G,n})_{\C}),\Q_\ell)$.  

By Theorem~\ref{T:Chur}~\eqref{i:topbounds}, there is a constant $C$ such that for all $j\geq 1$ and for all $n$, we have $\dim H^{j}(\CCHur^{c,c}_{G,n}(\C),\Q_\ell)\leq C^j$.  Thus $ \dim H^{j}_{\textrm{\'et}}(X_{\bar{\F}_q},\Q_\ell)\leq C^j$ for all $j\geq 1$.
  Thus using Poincar\'e duality, 
$ \dim H^{i}_{\textrm{\'et},c}(X_{\bar{\F}_q},\Q_\ell)\leq C^{2n-i}$ for all $i<2n$.
By Theorem~\ref{T:Chur} \eqref{i:comp} and Lemma~\ref{L:B}, we have that $X$ has $\#c$ components fixed by $\Frob$ for odd $n\geq n_G$ for some fixed $n_G$.

Then by the Grothendieck-Lefschetz trace formula we have
$$
\#X({\F}_q) =\sum_{j\geq 0} (-1)^j\Tr(\Frob|_{ H^j_{\textrm{c,\'et}}(X_{\bar{\F}_q},\Q_\ell)})
$$
and 
also we know $\Tr(\Frob|_{ H^{2n}_{\textrm{c,\'et}}(X_{\bar{\F}_q},\Q_\ell)})$ is $q^{n}$ times the number of components of $X$ fixed by $\Frob$.
Since $X$ is smooth, we have that the absolute value of any eigenvalue of $\Frob$ on $H^j_{\textrm{c,\'et}}((X_{\bar{\F}_q},\Q_\ell)$ is at most
$q^{j/2}$  .
Thus, for odd $n\geq n_G$,
\begin{align*}
|\#X({\F}_q)  - q^{n} \times\#c|&=\left| \sum_{0\leq j< 2\dim X} (-1)^j\Tr(\Frob|_{ H^j_{\textrm{c,\'et}}((X_{\bar{\F}_q},\Q_\ell)}) \right| \\
&\leq \sum_{0\leq j< 2\dim X} q^{j/2} C^{2n -j}\\
&\leq q^{n} \sum_{1\leq i } (\sqrt{q}/C)^{-i} .
\end{align*}
 The theorem follows.
\end{proof}

We have $Q=\F_q(t)$ and $Q_\infty=\F_q((t^{-1}))$, for $q$ odd. 
Unlike in the number field case, in which there is only one possible ramified quadratic extension of $\Q_\infty=\R$, here there are two ramified quadratic extensions of $Q_\infty=\F_q((t^{-1}))$. 
 If $K/\F_q(t)$ is a quadratic extension, we say it is imaginary quadratic of type I if $K_\infty\isom  \F_q((t^{-1/2}))$ and of type II if $K_\infty\isom  \F_q(((\alpha t)^{-1/2}))$ for an $\alpha\in \F_q\setminus \F_q^2$. 
Let $IQ_n$ be the set of $K\sub \bar{Q}$ such that $K$ is imaginary quadratic of type I and  $\Nm \Disc (K)=q^{n+1} $.  Let $IQ'_n$ be the set of $K\sub \bar{Q}$ such that $K$ is imaginary quadratic of type II and  $\Nm \Disc (K)=q^{n+1} $.

\begin{corollary}\label{C:MainFF}
Let $H$ be an odd finite group with GI-automorphism $\sigma$ such that $H\rtimes_\sigma \Ct$ is center-free.
As $q$ ranges through powers of primes such that $(q,2|H|)=1$ and $(q-1,|H|)=1$, we have
$$
\lim_{q\ra\infty} \limsup_{\substack{n\ra\infty\\ n \textrm{ odd}}} \frac{\sum_{K\in IQ_n} |\Sur_\sigma(G_K^{\un,\v},H)|}
{ \#IQ_n
}
=1.
$$
The same result holds if we replace $\limsup$ by $\liminf$ and/or  replace $IQ_n$ by $IQ'_n$.
\end{corollary}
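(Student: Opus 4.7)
My plan is to combine Theorem~\ref{T:trans} with Theorems~\ref{T:Chur} and~\ref{T:count} to count the numerator $\sum_{K\in IQ_n}|\Sur_\sigma(G_K^{\un,\v},H)|$, divide by an explicit count of the denominator $|IQ_n|$, and take limits. I will first carry out the argument for type I ($IQ_n$), then reduce the type II case ($IQ'_n$) to it by a coordinate change on $\F_q(t)$.

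First I will fix a ramified homomorphism $\phi_0\colon \Gal(\bar Q_\v/Q_\v)\to \tH$ with image $\langle(1,\sigma)\rangle$ and kernel cutting out the type I quadratic extension $\F_q((t^{-1/2}))$ of $Q_\v$. Theorem~\ref{T:trans} then identifies the pairs $(K,f)$ with $K\in IQ_n$ and $f\in\Sur_\sigma(G_K^{\un,\v},H)$ with the isomorphism classes of marked $(\tH,c)$-extensions of $\F_q(t)$ of infinity type $\phi_0$ and $\Nm\Disc(L)=q^{(n+1)|H|}$. Next I will determine which infinity types actually contribute to $\CCHur^{c,c}_{G,n}(\F_q)$ via Theorem~\ref{T:Chur} \eqref{i:bij}. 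The condition $\phi(F_\Delta)=1$ forces $\ker\phi$ to cut out a subfield of $\F_q((t^{-1/\infty}))$, which contains the type I extension of $Q_\v$ but not the type II extension (the latter would require $\sqrt{\alpha}\notin\F_q$), so only type I $\phi$'s survive, and the $|c|$ valid choices correspond to the $|c|$ possible images $\langle c_0\rangle$ with $c_0\in c$. Since $c$ is a single $\tH$-conjugacy class (Lemma~\ref{L:B}), for $h\in\tH$ with $hc_0h^{-1}=c_0'$ the assignment $(L,\pi,m)\mapsto(L,\mathrm{conj}_h\circ\pi,m)$ is a bijection between marked extensions of infinity type $\phi_{c_0}$ and those of infinity type $\phi_{c_0'}$, so all $|c|$ type I $\phi$'s yield the same count. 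Therefore
\[
\#\CCHur^{c,c}_{G,n}(\F_q)=|c|\cdot\sum_{K\in IQ_n}|\Sur_\sigma(G_K^{\un,\v},H)|.
\]

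Applying Theorem~\ref{T:count} will then give, for odd $n\ge n_G$ and $q$ satisfying the corollary's hypotheses,
\[
\sum_{K\in IQ_n}|\Sur_\sigma(G_K^{\un,\v},H)|=q^n+O\!\left(\frac{q^n}{|c|(\sqrt{q}/C-1)}\right).
\]
For the denominator, $IQ_n$ is in canonical bijection with monic squarefree polynomials of degree $n$ in $\F_q[t]$ (for $n$ odd, any type I class has a unique monic representative once the leading coefficient is rescaled within its square class), which has size $q^n-q^{n-1}$ for $n\ge 2$. The ratio is therefore $(1+O(1/\sqrt{q}))/(1-1/q)$, with bounds uniform in odd $n\ge n_G$, so both $\limsup_{n\to\infty}$ and $\liminf_{n\to\infty}$ lie in a $q$-dependent interval that collapses to $\{1\}$ as $q\to\infty$, proving the $IQ_n$ case. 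For $IQ'_n$, I will fix a non-square $\alpha\in\F_q^\times$: the $\F_q$-algebra automorphism of $\F_q(t)$ sending $t\mapsto \alpha t$ fixes the place $\v$, and, for odd $n$, multiplies the leading coefficient of a squarefree discriminant polynomial by the non-square $\alpha^n$, inducing a bijection $IQ_n\leftrightarrow IQ'_n$ that preserves $G_K^{\un,\v}$ together with its inertia element at $\v$, and hence also $|\Sur_\sigma(G_K^{\un,\v},H)|$. The type II ratio therefore equals the type I ratio, yielding the same limit.

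The hard part will be the infinity-type bookkeeping: carefully verifying that $\phi(F_\Delta)=1$ selects exactly the type I local extension at $\v$ (and not the type II one), and that the $\tH$-conjugation symmetry lets me extract the factor of $|c|$ cleanly so that the count per $\phi_0$ matches $\sum_{K\in IQ_n}|\Sur_\sigma(G_K^{\un,\v},H)|$. Once these two points are settled, the combination of Theorem~\ref{T:count} with the elementary formula $|IQ_n|=q^n-q^{n-1}$ and the coordinate-change argument for $IQ'_n$ completes the proof mechanically.
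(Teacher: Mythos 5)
Your proposal is correct and follows essentially the same route as the paper: use the conjugation action of $\tH$ on infinity types to show all $\#c$ admissible types contribute equally, identify the type-I count with $\#\CCHur^{c,c}_{G,n}(\F_q)/\#c$ via Theorem~\ref{T:trans} and Theorem~\ref{T:Chur}\eqref{i:bij}, apply the point count of Theorem~\ref{T:count} together with $\#IQ_n=q^n-q^{n-1}$, and handle $IQ'_n$ by the substitution $t\mapsto\alpha t$. Your added justification that $\phi(F_\Delta)=1$ selects the type I local extension is a correct elaboration of a point the paper states more briefly.
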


Theorem~\ref{T:FFqlimit} then follows from Corollary~\ref{C:MainFF} after noting that $H\rtimes_\sigma \Ct$ is center-free if and only if the center of $H$ contains no elements fixed by $\sigma$ except the identity.  

\begin{proof}
By Theorem~\ref{T:Chur}~\eqref{i:bij} the points $\CCHur^{c,c}_{G,n}(\F_q)$ are in bijection with isomorphism classes of marked $(\GG,c)$ extensions $(L,\pi,m)$ of $Q$ with certain infinity types $\phi$.  These infinity types are all $\GG$-conjugate, and there are $\#c$ of them.
Let $\phi_0$ be the infinity type such that $\phi(F_\Delta)=1$ and $\im \phi=\langle (1,\sigma) \rangle$.
 Note that $\F_q((t^{-1/2}))$ is the imaginary quadratic extension given by $\ker(\phi_0)$.

  Let $\phi: \Gal(\bar{Q}_\v/Q_\v) \ra \GG$ be a ramified homomorphism with image $\langle (1,\sigma) \rangle$,  let $g\in \GG$, and let $\phi^g$ denote the conjugation. Then 
isomorphism classes of marked $(\GG,c)$ extensions $(L,\pi,m)$ of $Q$ with infinity type $\phi$ of a given discriminant
are in bijection with   isomorphism classes of marked $(\GG,c)$ extensions $(L,\pi,m)$ of $Q$ with infinity type $\phi^g$ and that discriminant by sending $(L,\pi,m)$ to $(L,\pi^g,m)$.
So, we have that 
\begin{align*}
&\#\CCHur^{c,c}_{G,n}(\F_q)\\
&=\#c \cdot \#\{ &\textrm{isom. classes of  marked $(\GG,c)$-extensions $L/\F_q(t)$ of infinity type $\phi_0$}\\ & & \textrm{ and $\Nm \Disc (L)=q^{(n+1)|H|}$}\}.
\end{align*}
Further, by Theorem~\ref{T:trans}, we then conclude that 
\begin{align*}
&\#\CCHur^{c,c}_{G,n}(\F_q)\\
&=\#c \cdot \{(K,f) |  K\sub \bar{Q}, K \textrm{ imag. quad. type I, }
f\in \Sur_\sigma(G_K^{\un,\v},H),
 \Nm \Disc (K)=q^{n+1} \}.
\end{align*}

So by  Theorem~\ref{T:count}, we have a constant $C$, only depending on $H$, such that for $q\geq 4C^2$ and odd $n\geq n_G$ 
$$
\left| {\sum_{K\in IQ_n} |\Sur_\sigma(G_K^{\un,\v},H)|} -q^n \right|
\leq {2Cq^{n-1/2}}.
$$
  Thus, for $q\geq 4C^2$ and all odd $n\geq n_G$ 
$$
\frac{\sum_{K\in IQ_n} |\Sur_\sigma(G_K^{\un,\v},H)|}
{\#IQ_n
}
=\frac{q^n +O(q^{n-1/2})}{q^n-q^{n-1}}=1+O(q^{-1/2}).
$$
It follows that the limit as $q\ra \infty$, of the of $\limsup$ or $\liminf$, in odd $n$, of the lefthand side are both 1.
 For the case of $IQ'_n$, we have a bijection $K\mapsto K \tensor_{\F_q(t)}\F_q(t)$  (where the map $\F_q(t) \ra \F_q(t)$ is given by $t \mapsto \alpha t$, for some $\alpha\in \F_q\setminus \F_q^2$) between $IQ_n$ and $IQ_n'$ that preserves $G_{K}^{\un,\v}.$
\end{proof}

\subsection{Further results assuming a conjecture on the homology of Hurwitz spaces}

The program developed by Ellenberg, Venkatesh, and Westerland in \cite{EVW2} aims to prove stronger results on the topology of Hurwitz spaces, from which corresponding stronger results on the point counts would follow.  
For example, $\operatorname{HS}_\alpha$ \cite[Section 11.1]{EVW2} is a conjecture on the homology of Hurwitz spaces for a given group $G$ and conjugacy invariant subset $c$.  

\begin{theorem}\label{T:withconj}
Let $H$ be an odd finite group with GI-automorphism $\sigma$ such that $H\rtimes_\sigma \Ct$ is center-free.  If $\operatorname{HS}_\alpha$ holds for $G=H\rtimes_\sigma \Ct$ and $c$ the order $2$ elements of $G$, 
then there is a $q_0$ such that for $q\geq q_0$, with $(q,2|H|)=1$ and $(q-1,|H|)=1$, we have
$$
\limsup_{\substack{n\ra\infty\\ n \textrm{ odd}}} \frac{\sum_{K\in IQ_n} |\Sur_\sigma(G_K^{\un,\v},H)|}
{ \#IQ_n
}
=1.
$$
The same result holds if we  replace $IQ_n$ by $IQ'_n$.
\end{theorem}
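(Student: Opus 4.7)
The plan is to replay the proof of Corollary~\ref{C:MainFF}, replacing the topological cohomology bound of Theorem~\ref{T:Chur}~\eqref{i:topbounds} by the stronger input provided by $\operatorname{HS}_\alpha$. The only step in the existing argument that forces $q \to \infty$ rather than a fixed large $q$ is the error estimate in Theorem~\ref{T:count}: the exponential-in-degree bound $\dim H^i(\CHur_{G,n}, \Q_\ell) \leq C^i$, together with Poincar\'e duality and Grothendieck--Lefschetz, yields an error of size roughly $q^n/(\sqrt{q}/C-1)$, which is $q^n$ times a factor in $q$ alone that vanishes only in the limit $q \to \infty$.

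Under $\operatorname{HS}_\alpha$, one has homological stability for the Hurwitz spaces $\CHur_{G,n}$ in a range growing linearly in $n$: there are constants $A>0$ and $B\geq 0$ depending only on $G$ and $c$ such that $H^i(\CHur_{G,n},\Q_\ell)$ stabilizes and is of bounded dimension for $1\leq i\leq An-B$. Passing through the comparison isomorphisms already used in the proof of Theorem~\ref{T:count} (\'etale-to-singular over $\C$ via \cite[Expos\'e XI]{SGA4:3}, and $\C$-to-$\bar{\F}_q$ via the $\Q_\ell$-coefficient strengthening of \cite[Proposition 7.7]{EVW1}), this control transfers to $H^{j}_{\textrm{c,\'et}}(X_{\bar{\F}_q},\Q_\ell)$ for $j$ in the complementary top range $2n-An+B\leq j\leq 2n-1$, where $X=\CCHur^{c,c}_{G,n}\otimes\F_q$.

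Feeding this into Grothendieck--Lefschetz, the main term $\#c\cdot q^n$ still arises from $H^{2n}_{\textrm{c,\'et}}$ via Theorem~\ref{T:Chur}~\eqref{i:comp} and Lemma~\ref{L:B}; the stable part in the top range contributes at most $O(q^{n-\delta})$ for some $\delta>0$ (since its dimensions are now bounded independently of $n$ and the Weil estimate gains a factor of $\sqrt{q}$ per drop in degree); and the remaining unstable degrees contribute at most $q^{(2n-An+B)/2}\,C^{An-B}$ times a geometric factor, which is $O(q^{n-\eta})$ for some $\eta>0$ as soon as $q$ exceeds an explicit threshold depending on $C$ and $A$. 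Hence for all $q\geq q_0=q_0(G,A)$,
$$
\#\CCHur^{c,c}_{G,n}(\F_q)=\#c\cdot q^n\bigl(1+o(1)\bigr)\text{ as odd }n\to\infty,
$$
and feeding this into the remainder of the proof of Corollary~\ref{C:MainFF} --- the bijection of Theorem~\ref{T:trans}, the $\GG$-conjugation action on infinity types that singles out $\phi_0$, and the asymptotic $\#IQ_n=q^n-q^{n-1}$ --- yields the claimed $\limsup=1$ for every such $q$; the case of $IQ_n'$ is handled by the substitution $t\mapsto\alpha t$ exactly as at the end of Corollary~\ref{C:MainFF}.

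The hardest step will be extracting from $\operatorname{HS}_\alpha$ a sufficiently clean $\Q_\ell$-coefficient stabilization statement and verifying its compatibility with the base-change comparison isomorphisms used above, in particular that the linear stability range survives specialization from $\C$ to $\bar{\F}_q$; secondarily, one must check that the explicit stable corrections to the main term are genuinely of lower order in $q$ rather than cancelling into the leading $\#c\cdot q^n$, which should follow from the explicit description of the stable homology in the Ellenberg--Venkatesh--Westerland framework together with the component count of Lemma~\ref{L:B} applied in the limit.
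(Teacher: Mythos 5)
Your plan diverges from the paper's proof, and as written it has a genuine gap at its central analytic step. You claim that, in the stable range supplied by $\operatorname{HS}_\alpha$, bounded (or stabilized) Betti numbers together with the Weil bound make the stable-range contribution $O(q^{n-\delta})$, and you then conclude $\#\CCHur^{c,c}_{G,n}(\F_q)=\#c\cdot q^n(1+o(1))$ as $n\ra\infty$ for each fixed $q\geq q_0$. But for fixed $q$, a bound of the form $b_i\,q^{n-i/2}$ summed over $i\geq 1$ with $b_i$ merely bounded (or even just $b_i\leq C^i$ with $q>C^2$) gives an error that is a constant multiple of $q^{n}$ — namely $O(q^{-1/2})\cdot q^n$ with the implied constant independent of $n$ — not $o(q^n)$ in $n$. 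That is exactly the error term already present in Theorem~\ref{T:count}, and it is why Corollary~\ref{C:MainFF} needs the outer limit $q\ra\infty$. Mere stabilization with controlled dimensions therefore reproduces Corollary~\ref{C:MainFF} but cannot give the fixed-$q$ limit equal to $1$. To get the theorem along your route you would need the positive-degree cohomology in the stable range to contribute genuinely $o(q^n)$ at fixed $q$ — e.g.\ vanishing of the stable rational homology of the relevant components in positive degrees, or precise Frobenius-trace information there — and this is precisely the point you defer to ``the explicit description of the stable homology'' in your final paragraph. (Your treatment of the unstable range, $i$ beyond the linear stability window, is fine: there the exponential bound $C^i$ against $q^{n-i/2}$ does decay exponentially in $n$ once $q>C^2$.)

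The paper avoids this entire analysis: its proof of Theorem~\ref{T:withconj} simply applies Theorem~\ref{T:trans} together with the conditional counting theorem \cite[Theorem 11.1.1]{EVW2}, which (assuming $\operatorname{HS}_\alpha$) already packages the fixed-$q$, $n\ra\infty$ asymptotic for marked $(\GG,c)$-extensions; the only new inputs are Lemma~\ref{L:B}, which shows the constant $B(L_\infty,\mathfrak{m})$ appearing there equals $1$, and the bookkeeping that an \'etale $G$-extension of the local field has $|G|/|\Aut_G(L_\infty)|$ infinity types and $|G|$ markings. So the substance you are missing — why $\operatorname{HS}_\alpha$ upgrades the error from ``$O(q^{-1/2})$ relative, uniformly in $n$'' to ``$o(1)$ as $n\ra\infty$ at fixed $q$'' — is exactly the content of that cited theorem, and your sketch does not supply a replacement for it.
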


\begin{proof}
We apply Theorem~\ref{T:trans} and \cite[Theorem 11.1.1]{EVW2}.  Lemma~\ref{L:B} shows that the quantity $B(L_\infty, \mathfrak{m})$ appearing in \cite[Theorem 11.1.1]{EVW2} is $1$.  Finally, we use that an \'etale $G$-extension $L_\infty$  has $|G|/|\Aut_G(L_\infty)|$ corresponding infinity types and a $G$-extension has  has $|G|$ markings.  
\end{proof}

\section{Non-equivariant moments}\label{S:noneq}
While in this paper, we have asked about the equivariant moments, or averages of $|\Sur_\sigma(G_K^{\un,\v},H)|$, one could naturally ask about non-equivariant moments, or averages of $|\Sur(G_K^{\un,\v},H)|$.  It turns out these non-equivariant moments reduce in a simple way to equivariant moments.

Let $G$ be a group with a GI-automorphism $\sigma$.  Then we have an injection
\begin{align*}
\Sur(G,H) &\ra \Hom_\sigma (G,H\times H)\\
f &\mapsto f \times f\sigma,
\end{align*}
where the automorphism $\sigma$ of $H\times H$ is switching the factors.
In fact, this is a bijection onto the subset of $\Hom_\sigma (G,H\times H)$ that surject onto the first factor.
Let $\mathcal{F}$ be the set of $\sigma$-invariant subgroups of $H\times H$ that surject onto the first factor.
Then
\begin{equation}\label{E:noneq}
|\Sur(G,H)| = \sum_{F\in \mathcal{F}} |\Sur_\sigma(G,F)|.
\end{equation}
Note since $\sigma$ is GI on $G$, if it is not $GI$ on $F$, then $|\Sur_\sigma(G,F)|=0$.  Thus Equation~\eqref{E:noneq} would still hold if we restrict the sum on the right to $F$ such that switching factors in $H\times H$ is $GI$ on $F$ (i.e. $F$ generated by elements of the form $(h,h^{-1})$ for $h\in H$).

\subsection*{Acknowledgements} 
The authors thank Jordan Ellenberg, Daniel Ross, and Akshay Venkatesh for helpful conversations.
We thank Bjorn Poonen for pointing out a mistake in an earlier version of Lemma~\ref{L:la} as well as providing a quicker proof of the lemma.
The first author was supported by Simons Foundation Award MSN-179747. The second author was supported by an American Institute of Mathematics Five-Year Fellowship, a Packard Fellowship for Science and Engineering, a Sloan Research Fellowship, and National Science Foundation grants DMS-1147782 and DMS-1301690.

\newcommand{\etalchar}[1]{$^{#1}$}
\def\cprime{$'$}


\begin{thebibliography}{CKL{\etalchar{+}}15}

\bibitem[AM15]{AM15}
Michael Adam and Gunter Malle.
\newblock A class group heuristic based on the distribution of 1-eigenspaces in
  matrix groups.
\newblock {\em J. Number Theory}, 149:225--235, 2015.

\bibitem[BBH16]{BBH16}
Nigel Boston, Michael Bush, and Farshid Hajir.
\newblock Heuristics for p-class towers of imaginary quadratic fields.
\newblock Math Annalen (2016). doi:10.1007/s00208-016-1449-3.

\bibitem[Bha14]{BhaGeomSieve}
Manjul Bhargava.
\newblock The geometric sieve and the density of squarefree values of invariant
  polynomials, 2014.
\newblock arXiv:1402.0031.

\bibitem[BN06]{BostonNover}
Nigel Boston and Harris Nover.
\newblock Computing pro-p {G}alois groups.
\newblock In {\em Algorithmic Number Theory, 7th International Symposium,
  ANTS-VII, Berlin, Germany, July 23-28, 2006, Proceedings}, pages 1--10. 2006.

\bibitem[Bos91]{Bos91}
Nigel Boston.
\newblock Explicit deformation of {G}alois representations.
\newblock {\em Inventiones Mathematicae}, 103:181--196, 1991.

\bibitem[CKL{\etalchar{+}}15]{Clancy15}
Julien Clancy, Nathan Kaplan, Timothy Leake, Sam Payne, and Melanie~Matchett
  Wood.
\newblock On a {C}ohen--{L}enstra heuristic for {J}acobians of random graphs.
\newblock {\em J. Algebraic Combin.}, 42(3):701--723, 2015.

\bibitem[CL84]{CL84}
H.~Cohen and H.~W. Lenstra, Jr.
\newblock Heuristics on class groups of number fields.
\newblock In {\em Number theory, {N}oordwijkerhout 1983 ({N}oordwijkerhout,
  1983)}, volume 1068 of {\em Lecture Notes in Math.}, pages 33--62. Springer,
  Berlin, 1984.

\bibitem[EVW12]{EVW2}
Jordan Ellenberg, Akshay Venkatesh, and Craig Westerland.
\newblock Homological stability for {H}urwitz spaces and the {C}ohen {L}enstra
  conjecture over function fields, {II}.
\newblock arXiv:1212.0923, 2012.

\bibitem[EVW16]{EVW1}
Jordan~S. Ellenberg, Akshay Venkatesh, and Craig Westerland.
\newblock Homological stability for {{Hurwitz}} spaces and the
  {{Cohen}}-{{Lenstra}} conjecture over function fields.
\newblock {\em Annals of Mathematics. Second Series}, 183(3):729--786, 2016.

\bibitem[FW89]{FW89}
Eduardo Friedman and Lawrence~C. Washington.
\newblock On the distribution of divisor class groups of curves over a finite
  field.
\newblock In {\em Th\'eorie des nombres ({Q}uebec, {PQ}, 1987)}, pages
  227--239. de Gruyter, Berlin, 1989.

\bibitem[Gar15]{Gar15}
Derek Garton.
\newblock Random matrices, the {C}ohen-{L}enstra heuristics, and roots of
  unity.
\newblock {\em Algebra Number Theory}, 9(1):149--171, 2015.

\bibitem[Gor07]{Gor07}
Daniel Gorenstein.
\newblock {\em Finite groups}.
\newblock AMS Chelsea Publishing Series. American Mathematical Society, 2007.

\bibitem[Gru76]{gruenberg1976relation}
K.W. Gruenberg.
\newblock {\em Relation Modules of Finite Groups}.
\newblock Number~25 in Conference Board of the Mathematical Sciences regional
  conference series in mathematics. American Mathematical Society, 1976.

\bibitem[Hal34]{Hal34}
Philip Hall.
\newblock A contribution to the theory of groups of prime-power order.
\newblock {\em Proc. London Math. Soc.}, 36:29--95, 1934.

\bibitem[HM01]{HM01}
Farshid Hajir and Christian Maire.
\newblock {Asymptotically good towers of global fields.}
\newblock {Casacuberta, Carles (ed.) et al., 3rd European congress of
  mathematics (ECM), Barcelona, Spain, July 10--14, 2000. Volume II. Basel:
  Birkh\"auser. Prog. Math. 202, 207-218 (2001).}, 2001.

\bibitem[KV75]{KV75}
Helmut Koch and B.B. Venkov.
\newblock {\"U}ber den p-{K}lassenk\"orperturm eines imagin\"ar-quadratischen
  {Z}ahlk\"orpers.
\newblock {\em Ast\`erisque}, 24--25:57--67, 1975.

\bibitem[Mal08]{malle-cl}
Gunter Malle.
\newblock Cohen-{L}enstra heuristic and roots of unity.
\newblock {\em J. Number Theory}, 128(10):2823--2835, 2008.

\bibitem[O'B90]{OBrien}
E.A. O'Brien.
\newblock The p-group generation algorithm.
\newblock {\em Journal of Symbolic Computation}, 9(5):677 -- 698, 1990.

\bibitem[SGA73]{SGA4:3}
{\em Th\'eorie des topos et cohomologie \'etale des sch\'emas. {T}ome 3}.
\newblock Lecture Notes in Mathematics, Vol. 305. Springer-Verlag, Berlin-New
  York, 1973.
\newblock S{\'e}minaire de G{\'e}om{\'e}trie Alg{\'e}brique du Bois-Marie
  1963--1964 (SGA 4), Dirig{\'e} par M. Artin, A. Grothendieck et J. L.
  Verdier. Avec la collaboration de P. Deligne et B. Saint-Donat.

\bibitem[Wil98]{Wil98}
John~S. Wilson.
\newblock {\em Profinite groups}.
\newblock Clarendon Press, Oxford, UK, 1998.

\end{thebibliography}
\end{document}